\def\eps{\varepsilon}
\def\e{{\rm e}}
\def\dist{{\rm dist}}
\def\dd{{\rm d}}
\def\ddt{{\frac{\dd}{\dd t}}}
\def\R {\mathbb{R}}
\def\u {\boldsymbol{u}}
\def\N {\mathbb{N}}
\def\AA {{\mathcal A}}
\def\RR {{\mathcal R}}
\def\ZZ {{\mathbb Z}}
\def \l {\langle}
\def \r {\rangle}
\def\TT {{\mathbb T}^2}
\def\de{{\partial}}
\newtheorem{proposition}{Proposition}[section]
\newtheorem{theorem}[proposition]{Theorem}
\newtheorem{corollary}[proposition]{Corollary}
\newtheorem{lemma}[proposition]{Lemma}
\theoremstyle{definition}
\newtheorem{definition}[proposition]{Definition}
\newtheorem{remark}[proposition]{Remark}
\numberwithin{equation}{section}
\title[Smooth attractors for critical SQG]{Smooth attractors for weak solutions of the SQG equation with critical dissipation}
\author[M. Coti Zelati and P. Kalita]{Michele Coti Zelati and Piotr Kalita}
\address{Department of Mathematics, University of Maryland, College Park, MD 20742, USA}
\email{micotize@umd.edu}
\address{Faculty of Mathematics and Computer Science, Jagiellonian University, 30-348 Krak\'ow, Poland}
\email{piotr.kalita@ii.uj.edu.pl}
\subjclass[2000]{35Q35, 35B41, 35B45}
\keywords{Surface quasi-geostrophic equation, critical dissipation, global attractors, Sobolev regularity}
\begin{document}

\begin{abstract}
We consider the evolution of weak vanishing viscosity solutions to the critically dissipative surface 
quasi-geostrophic equation. Due to the possible non-uniqueness of solutions, we rephrase the problem 
as a set-valued dynamical system and prove the existence of a global attractor of optimal Sobolev regularity.
To achieve this, we derive a new Sobolev estimate involving H\"older norms,  which complement the
existing estimates based on commutator analysis.
\end{abstract}


\maketitle

\section{Introduction}
The forced, critically dissipative surface quasi-geostrophic (SQG) equation models the
temperature $\theta$ on the 2D boundary of a rapidly rotating half space, with small Rossby and Eckman numbers,
and constant potential vorticity (cf. \cites{Pedlosky82, CMT94}). As an initial-boundary value problem, it reads
\begin{equation}\label{eq:qge1}
\begin{cases}
\de_t\theta +\u \cdot \nabla \theta+(-\Delta)^{1/2}\theta=f,\\
\u = \RR^\perp \theta = \nabla^\perp (-\Delta)^{-1/2}\theta,\\
\theta(0)=\theta_0,\quad \int_{\TT}\theta_0(x)=0,
\end{cases}
\end{equation}
where  $\theta_0$ is the initial condition and $f$ is a time-independent, mean free force. Since its first appearance in the
mathematical literature in \cite{CMT94}, it has attracted tremendous amount of attention, in part due to striking
similarities with the three dimensional Euler and Navier--Stokes equations. We mention, without any aim of completeness,
the references \cites{Resnick95, CC04, FPV09, Dong10, DD08, CCW01} concerning various properties 
of the critical SQG equation, 
and the more recent works on the 
regularity of solutions  \cites{CV10, KNV07, CV12, KN09}.
In this paper, we analyze the space-periodic SQG equation \eqref{eq:qge1} from the longtime behavior viewpoint, 
and establish the following result.

\begin{theorem}\label{thm:main}
Let $f\in L^\infty(\TT)\cap H^{1/2}(\TT)$. The multivalued dynamical system $S(t)$ generated by \eqref{eq:qge1} 
on $L^2(\TT)$ possesses a unique global attractor $\AA$ with the
following properties:
\begin{enumerate}
	\item $S(t)\AA=\AA$ for every $t\geq0$, namely $\AA$ is invariant.
	\item $\AA$ is bounded in $H^{1/2}(\TT)$, and is thus compact in $L^2(\TT)$.
	\item For every bounded set $B\subset L^2(\TT)$,
	$$
	\lim_{t\to\infty}\dist_{L^2}(S(t)B,\AA)=0,
	$$
	where $\dist_{L^2}$ stands for the usual Hausdorff semi-distance between sets given by the
	$L^2(\TT)$ norm.
	\item $\AA$ is minimal in the class of $L^2(\TT)$-closed attracting sets and 
	maximal in the class of $L^2(\TT)$-bounded invariant sets.
\end{enumerate}
\end{theorem}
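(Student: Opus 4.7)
The plan is to cast \eqref{eq:qge1} as a multivalued dynamical system on $L^2(\TT)$, since uniqueness of weak solutions with merely $L^2$ initial data is not known. I would define $S(t)\theta_0$ to be the set of values at time $t$ attained by weak vanishing viscosity solutions issued from $\theta_0$, constructed by passing $\nu\to 0^+$ in the regularized problem with added diffusion $-\nu\Delta\theta$. Pairing \eqref{eq:qge1} with $\theta$ yields the standard identity
\begin{equation*}
\frac{1}{2}\ddt\|\theta\|_{L^2}^2+\|\theta\|_{\dot H^{1/2}}^2=\l f,\theta\r,
\end{equation*}
from which Gronwall produces a uniform $L^2$ absorbing ball $\B_0$ whose radius depends only on $\|f\|_{H^{-1/2}}$. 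The $L^\infty$ maximum principle provides a complementary long-time bound $\|\theta(t)\|_{L^\infty}\leq C(\|f\|_{L^\infty})$.

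The heart of the matter is to upgrade $\B_0$ to an absorbing set bounded in $H^{1/2}(\TT)$. Formally testing \eqref{eq:qge1} with $(-\Delta)^{1/2}\theta$ gives
\begin{equation*}
\frac{1}{2}\ddt\|\theta\|_{\dot H^{1/2}}^2+\|\nabla\theta\|_{L^2}^2=-\int_\TT(\u\cdot\nabla\theta)(-\Delta)^{1/2}\theta\,\dd x+\int_\TT f(-\Delta)^{1/2}\theta\,\dd x,
\end{equation*}
where the nonlinear term is super-critical and not controllable by commutator estimates alone. I would invoke the new H\"older--Sobolev estimate announced in the abstract to dominate it by $\|\theta\|_{C^\alpha}\|\nabla\theta\|_{L^2}^2$ (up to absorbable corrections), allowing absorption into the dissipation. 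The required uniform H\"older bound $\|\theta(t)\|_{C^\alpha}\leq M$ for $t\geq t_0$ is furnished by the Caffarelli--Vasseur type interior regularization for critical SQG with forcing, whose constants depend only on $\|\theta_0\|_{L^\infty}$ and $\|f\|_{L^\infty}$. Closing the resulting differential inequality yields an absorbing set $\B_1\subset\B_0$ bounded in $H^{1/2}(\TT)$ and therefore compact in $L^2(\TT)$ by Rellich.

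Next I would verify the axioms for a multivalued semigroup on $L^2(\TT)$ (in the sense of Melnik--Valero): closed values, the inclusion $S(t+s)\theta_0\subset S(t)S(s)\theta_0$, and the closed-graph property in $L^2$. The last of these reduces to a stability statement for sequences of weak vanishing viscosity solutions with $L^2$-convergent initial data, whose limit is again a weak vanishing viscosity solution; the nonlinear term passes to the limit thanks to the compact embedding $H^{1/2}\hookrightarrow L^2$ and the uniform bounds from the previous step. The abstract attractor theorem for multivalued semigroups then produces a unique global attractor $\AA\subset\B_1$, automatically satisfying items (1), (3), (4), while (2) is inherited from $\B_1$.

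The principal obstacle is the new H\"older--Sobolev estimate itself: an inequality of the form $|\int_\TT(\u\cdot\nabla\theta)(-\Delta)^{1/2}\theta\,\dd x|\lesssim\|\theta\|_{C^\alpha}\|\nabla\theta\|_{L^2}^2$ (or an interpolated version with an absorbable power of the top-order norm) presumably requires a delicate paraproduct/Littlewood--Paley decomposition exploiting $\u=\RR^\perp\theta$ and the nonlocal difference representation of $\dot H^{1/2}$. A secondary technical burden is propagating all a priori bounds uniformly through the vanishing viscosity limit, so that they survive for the limiting weak solutions and underwrite both the closed-graph property and the $H^{1/2}$ regularity of the attractor.
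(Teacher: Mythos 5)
There are two genuine gaps. The first concerns the heart of the argument, which you correctly identify as the ``principal obstacle'' but then leave unproved and, more importantly, misdescribe in a way that would not close. A bound of the form $|\int_\TT(\u\cdot\nabla\theta)\Lambda\theta\,\dd x|\lesssim\|\theta\|_{C^\alpha}\|\nabla\theta\|_{L^2}^2$ cannot be absorbed into the dissipation $\|\nabla\theta\|_{L^2}^2$ unless the H\"older norm is small; the uniform H\"older bound coming from the Caffarelli--Vasseur-type regularization with forcing is a constant $K_\beta$ depending on $\|f\|_{L^\infty}$ that is in general large, so absorption fails. The paper's key estimate (Theorem \ref{prop:sob}) has a structurally different form, $\ddt\|\theta\|^2_{H^{1/2}}+\frac14\|\theta\|^2_{H^{1}}\leq cK_\beta^{4/\beta}+c\|f\|_{H^{1/2}}^2$, in which the H\"older norm enters only through an additive constant. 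It is obtained not by paraproduct or commutator analysis (which the paper explicitly sets aside) but by the pointwise finite-difference method: one tracks $v=\delta_h\theta/|h|^{1+\alpha}$, uses the Constantin--Vicol-type nonlinear lower bound $D_\gamma[\delta_h\theta]\gtrsim |\delta_h\theta|^{2+\gamma/(1-\beta)}\big/\big(|h|^{\gamma/(1-\beta)}[\theta]_{C^\beta}^{\gamma/(1-\beta)}\big)$ together with a pointwise bound on $\delta_h\u$ in terms of $D_\gamma[\delta_h\theta]^{1/2}$ and $[\theta]_{C^\beta}$, then optimizes the cutoff radius and applies Young's inequality so that the nonlinear contribution is dominated by the extra dissipation plus a constant. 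Without this (or an equivalent) mechanism your differential inequality does not yield an $H^{1/2}$ absorbing set.

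The second gap concerns item (1). The Melnik--Valero attractor theorem for a $t_*$-closed multivalued semiflow with a compact absorbing set gives only \emph{negative} semiinvariance, $\AA\subset S(t)\AA$; the full equality $S(t)\AA=\AA$ requires the semiflow to be \emph{strict}, i.e.\ $S(t)S(s)\theta_0\subset S(t+s)\theta_0$, which amounts to proving that the concatenation of two vanishing viscosity solutions is again a vanishing viscosity solution. This is not automatic, since the approximating viscous sequence of the first piece need not converge to the prescribed second piece after time $t$. The paper resolves it (Lemma \ref{lem:strict}) by bootstrapping the regularity of $S(t)\theta_0$ for $t>0$ all the way to $H^1$ (this is where $f\in H^{1/2}\cap L^\infty$ is used) and invoking weak--strong uniqueness so that the continuation is uniquely determined. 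Your proposal verifies only the inclusion $S(t+s)\theta_0\subset S(t)S(s)\theta_0$ and the closed-graph property, so item (1) as stated does not follow from your argument. A smaller related point: the closed-graph step itself requires a diagonal selection of viscous approximants over the metrizable space $C_w([0,t_*];B_R)$, which hinges on the strong $L^2$ convergence of initial data built into the definition of vanishing viscosity solutions; this should be made explicit rather than attributed solely to the compact embedding $H^{1/2}\hookrightarrow L^2$.
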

The dynamical system $S(t)$ is generated by the class of vanishing viscosity weak solutions to \eqref{eq:qge1}, 
defined as subsequential limits of solutions of a suitable family of regularized equations (see Section \ref{sec:prel}).
As these are not known to be uniquely determined by their initial condition, $\{S(t)\}_{t\in \R^+}$ is a set-valued family of 
solution operators.

Assuming $f\in L^p$, for some $p>2$, the existence of the global attractor for weak solutions has been recently established in \cite{CD14}. The important observation in that paper is that solutions to \eqref{eq:qge1} with $L^2$ initial data become 
instantaneously in $L^\infty$ and satisfy the energy \emph{equality}. This is sufficient to deduce the existence of a global
attractor that is strongly compact in $L^2$ and bounded in $L^\infty$. In the context of strong solutions 
(i.e., with $H^1$ initial data), the asymptotic behavior of \eqref{eq:qge1} has been analyzed in detail in 
\cites{CTV15,CCZV15}. In this case, the corresponding dynamical system is single-valued, and the main difficulties arise
in establishing proper dissipative estimates.

In this article, we establish the optimal Sobolev
regularity of the global attractor for vanishing viscosity weak solutions, hence improving on the result of \cite{CD14}. 
The proof is based on a new Sobolev estimate
(see Appendix \ref{app:sob}), derived by means
of suitable lower bounds on the fractional Laplacian \cite{CV12}, and which complement and to a certain extent improve the
existing estimates based on commutator analysis in the spirit of \cites{KP88, KPV91}. This method also requires a uniform H\"older
estimate, established in \cite{CCZV15}. It turns out that the same techniques yield  an even stronger result, which we
state below.

\begin{corollary}\label{cor:main}
Under the assumptions of Theorem \ref{thm:main}, $\AA$ is a bounded set of $H^1(\TT)$.
\end{corollary}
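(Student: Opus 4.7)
The strategy is to bootstrap from the $H^{1/2}$ bound on $\AA$ supplied by Theorem~\ref{thm:main} up to an $H^1$ bound, using the invariance of the attractor together with the new Sobolev estimate of Appendix~\ref{app:sob} and the uniform H\"older regularity established in \cite{CCZV15}.

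First I would fix any $\theta\in\AA$. The invariance identity $S(t)\AA=\AA$ produces a complete bounded trajectory, i.e.\ a vanishing-viscosity weak solution $\xi:[-1,0]\to\AA$ with $\xi(0)=\theta$, along which $\|\xi(s)\|_{H^{1/2}}\le M$ uniformly by Theorem~\ref{thm:main}. Combining the instantaneous $L^\infty$ regularization from \cite{CD14} with the Caffarelli--Vasseur-type H\"older estimate of \cite{CCZV15}, I would fix some $\delta\in(0,1)$ and enlarge $M$ so that $\|\xi(s)\|_{C^\delta(\TT)}\le M$ for all $s\in[-1,0]$.

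Next, integrating the $H^{1/2}$ energy equality over $[-1,0]$ controls $\int_{-1}^{0}\|\xi(s)\|_{H^{1}}^{2}\,\dd s$ in terms of the endpoint $H^{1/2}$ norms and $\|f\|_{H^{1/2}}$, so a mean value argument yields a time $s_{0}\in[-1,0]$ with $\|\xi(s_{0})\|_{H^{1}}\le C$. I would then run the Appendix~\ref{app:sob} estimate at the $H^{1}$ level on the regularized approximations. Schematically, the expected inequality reads
\begin{equation*}
\ddt\|\xi\|_{H^{1}}^{2}+\|\xi\|_{H^{3/2}}^{2}\le C\bigl(1+\|\xi\|_{C^\delta}\bigr)^{\kappa}\|\xi\|_{H^{1}}^{2}+C\|f\|_{H^{1/2}}^{2},
\end{equation*}
in which the H\"older norm is already uniformly controlled. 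A Gronwall argument on $[s_{0},0]$, followed by passage to the vanishing-viscosity limit, then yields $\|\theta\|_{H^{1}}\le C$ with a constant $C$ independent of $\theta\in\AA$.

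The main obstacle is ensuring that the Appendix~\ref{app:sob} estimate can actually be closed at the $H^{1}$ threshold using only a H\"older bound on the right-hand side: the commutator techniques of \cite{KP88,KPV91} become borderline at this level, which is precisely why the pointwise lower bounds on the fractional Laplacian from \cite{CV12} have to be invoked to create slack against the $\|\xi\|_{H^{3/2}}^{2}$ dissipation. A secondary technicality is that every step must first be carried out at the regularized level and then transported along the vanishing-viscosity sequence defining $S(t)$, since elements of $\AA$ are only weak solutions a priori.
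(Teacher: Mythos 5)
Your overall architecture matches the paper's: a uniform local-in-time integral bound on $\|\theta\|_{H^1}^2$ (the paper's \eqref{eq:H1int}), a differential inequality at the $H^1$ level, and a uniform Gronwall argument. (The paper works with absorbing sets and the minimality of $\AA$ rather than with complete trajectories through points of $\AA$, but that difference is cosmetic.) The genuine gap is precisely in the step you yourself flag as ``the main obstacle'': you propose to obtain the $H^1$ differential inequality by running the Appendix~\ref{app:sob} estimate at the $H^1$ level, but Theorem~\ref{prop:sob} is stated and proved only for $\alpha\in(0,1)$, and its proof genuinely breaks at $\alpha=1$. Two things go wrong there. First, the right-hand side of the pointwise inequality contains the term $cK_\beta^{4\gamma/(\gamma+\beta-1)}|h|^{-2\alpha}$, which is integrable over $h\in\TT\subset\R^2$ only when $\alpha<1$ (the Appendix explicitly notes that the integration in $h$ ``is allowed, since $\alpha<1$''). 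Second, the forcing term is estimated there by $\|f\|_{H^\alpha}\|\theta\|_{H^\alpha}$, which at $\alpha=1$ would require $f\in H^1$, whereas the corollary assumes only $f\in H^{1/2}$. So the differential inequality you write down schematically is exactly what needs to be proved, and the tool you point to cannot produce it.

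The paper closes the $H^1$ estimate by a different device: it applies $\nabla$ to \eqref{eq:qge1}, takes the pointwise product with $\nabla\theta$, and invokes the nonlinear maximum principle of \cite{CTV15}*{Theorem 5.2} to obtain the pointwise bound \eqref{eq:last1}, namely $(\de_t+\u\cdot\nabla+\Lambda)|\nabla\theta|^2+\tfrac14 D[\nabla\theta]\le c(\|f\|_{L^\infty\cap H^{1/2}}+1)+\nabla f\cdot\nabla\theta$, where the constant depends only on the H\"older bounds already made uniform on the absorbing set. Integrating over $\TT$, using $\tfrac12\int_{\TT}D[\nabla\theta]\,\dd x=\|\theta\|_{H^{3/2}}^2$, and absorbing $\l\nabla f,\nabla\theta\r\le c\|f\|_{H^{1/2}}\|\theta\|_{H^{3/2}}\le\tfrac14\|\theta\|_{H^{3/2}}^2+c\|f\|_{H^{1/2}}^2$ into the dissipation yields \eqref{eq:H1}, whose right-hand side is a pure constant (no $\|\theta\|_{H^1}^2$ factor, unlike your schematic inequality); this modified treatment of the forcing is exactly what lets the hypothesis stay at $f\in H^{1/2}$. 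To repair your argument you would need to supply this gradient-level pointwise estimate (or some other substitute), since the finite-difference quantity $\delta_h\theta/|h|^{1+\alpha}$ used in Appendix~\ref{app:sob} does not extend past $\alpha=1$ as written.
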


Hence, we provide a fairly complete answer to the existence and regularity of attractors of the critically dissipative  
SQG equation. Indeed, the $H^1$ regularity proven in Corollary \ref{cor:main} implies that the restriction of $S(t)$ to
$\AA$ is a well-defined, single-valued semigroup of solution operators. In other words, this work bridges the existing
works on weak solutions \cite{CD14} and strong solutions \cites{CTV15,CCZV15}, answering in the positive a question
posed in \cite{CD14}. We state this as a corollary as well.

\begin{corollary}\label{cor:main2}
Assume that $f\in L^\infty(\TT)\cap H^1(\TT)$. Then $\AA$ is a bounded set of $H^{3/2}(\TT)$.
\end{corollary}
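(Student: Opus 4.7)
The plan is to replicate the argument that establishes Corollary \ref{cor:main}, but one derivative higher, with Sobolev parameter $s=3/2$ in place of $s=1$. By Theorem \ref{thm:main}(i) and Corollary \ref{cor:main}, the attractor $\AA$ is invariant under $S(t)$ and uniformly bounded in $H^{1}(\TT)$; combined with the uniform H\"older estimate from \cite{CCZV15}, this produces, for every complete trajectory $\theta:\R\to\AA$ and some $\alpha\in(0,1)$,
$$
K := \sup_{t\in\R}\bigl(\|\theta(t)\|_{H^{1}(\TT)} + \|\theta(t)\|_{C^{\alpha}(\TT)}\bigr) < \infty,
$$
with $K$ independent of the particular trajectory.

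I would then apply the Sobolev estimate of Appendix \ref{app:sob} at level $s=3/2$, formally testing the equation against $\Lambda^{3}\theta$ with $\Lambda=(-\Delta)^{1/2}$. The dissipation contributes $\|\Lambda^{2}\theta\|_{L^{2}}^{2}$ with a positive sign; the forcing term is controlled by the pairing $(\Lambda^{s-1/2}f,\Lambda^{s+1/2}\theta)$, which is precisely why the hypothesis is strengthened to $f\in H^{1}(\TT)$; and the nonlinearity is handled by the H\"older-based bound of Appendix \ref{app:sob}, whose coefficient is a function of $\|\theta\|_{C^{\alpha}}$ and is therefore uniformly controlled by $K$. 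The outcome should be a differential inequality of the shape
$$
\ddt\|\theta\|_{H^{3/2}}^{2} + c\|\theta\|_{H^{2}}^{2} \leq C\Phi(K)\|\theta\|_{H^{3/2}}^{2} + C\|f\|_{H^{1}(\TT)}^{2}.
$$
The interpolation $\|\theta\|_{H^{3/2}}^{2} \leq \|\theta\|_{H^{1}}\|\theta\|_{H^{2}}$ then lets me absorb the middle term into the dissipation up to a constant depending only on $K$ and $\Phi(K)$, leaving an autonomous dissipative inequality for $\|\theta(t)\|_{H^{3/2}}^{2}$. A standard estimate on a complete bounded trajectory, for instance by integrating the dissipative form of the inequality backward in time against an exponential kernel, then delivers a bound on $\|\theta(t)\|_{H^{3/2}}$ independent of $t\in\R$, hence the desired $H^{3/2}$ boundedness of $\AA$.

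The main obstacle I foresee is technical rather than conceptual. Since vanishing-viscosity weak solutions are not a priori regular enough to justify these manipulations directly, the computation must be carried out on the smooth viscous approximations $\theta^{\nu}$ used to construct $S(t)$ in Section \ref{sec:prel}. One has to verify that both the Appendix estimate and the H\"older estimate from \cite{CCZV15} hold with constants uniform in $\nu$, and that the resulting $H^{3/2}$ bound passes to the limit $\nu\to 0^{+}$ by weak lower semicontinuity of the Sobolev norm. Both points should follow routinely from the fact that the additional $-\nu\Delta$ term contributes only a favorable sign in all the energy computations, but they deserve careful verification.
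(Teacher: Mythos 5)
There is a genuine gap at the central step of your argument. The tool you invoke to control the nonlinearity, Theorem \ref{prop:sob}, is stated and proved only for $\alpha\in(0,1)$: its proof rests on the finite-difference representation $\|\theta\|_{H^\alpha}^2=\int\int |\delta_h\theta|^2|h|^{-2-2\alpha}\,\dd h\,\dd x$ and on an integration in $h$ that is explicitly flagged as ``allowed, since $\alpha<1$''. For $\alpha\geq 1$ this Gagliardo-type seminorm degenerates, so there is no ``level $s=3/2$'' version of the Appendix estimate to apply, and your alternative phrasing (``testing against $\Lambda^{3}\theta$'') is a commutator-type computation for which the H\"older-based bound of Appendix \ref{app:sob} provides no substitute: a Kato--Ponce commutator at the $H^{3/2}$ level would require control of something like $\|\nabla\u\|_{L^\infty}$, which is not available from $H^{1}\cap C^{\alpha}$ alone. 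Note that the paper already has to abandon the Appendix estimate one step earlier: the passage from $H^{1/2}$ to $H^{1}$ in Section \ref{sub:smoothnes} is not done via Theorem \ref{prop:sob} with $\alpha=1$ but via the pointwise identity for $|\nabla\theta|^2$ from \cite{CTV15}, which must handle the extra stretching term $\nabla\u:\nabla\theta\cdot\nabla\theta$. Your proposal never explains how the analogous difficulty is resolved at the next level, and that is precisely the content of the corollary. The remaining ingredients (interpolation to absorb the lower-order term, integration along complete bounded trajectories or the uniform Gronwall lemma combined with \eqref{eq:H1int}, and uniformity in the viscous approximation) are fine but peripheral.

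For comparison, the paper's own proof is essentially one sentence: once the attractor is bounded in $H^{1}$ (Corollary \ref{cor:main}), the restriction of $S(t)$ to it is a single-valued semigroup that coincides with the strong-solution semigroup of \cites{CTV15,CCZV15}, and for $f\in H^{1}$ the $H^{3/2}$ regularity of the attractor of that semigroup is already established there. If you want a self-contained argument instead of this citation, you would need to reproduce the higher-order analogue of \eqref{eq:H1}, i.e.\ the pointwise finite-difference analysis applied to $\nabla\theta$ at the $H^{1/2}$ level (or the commutator estimates of \cites{KP88,KPV91} adapted to $H^{1}\cap C^{\beta}$ data), not a rerun of Theorem \ref{prop:sob}.
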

In particular, under the assumptions of Corollary \ref{cor:main2}, the attractors of \cite{CD14}, \cites{CTV15,CCZV15}
and of Theorem \ref{thm:main} coincide.

\begin{remark}[On the forcing term $f$]
While the $H^{1/2}$ regularity of the attractor is optimal, due to the order of the dissipation in \eqref{eq:qge1},
we believe that the assumptions on $f$ could be relaxed. The minimal requirement would be $f\in L^2$. We
are skeptical that this could be the case, since even in \cite{CD14} it is assumed that $f\in L^p$, for some $p>2$.
The requirement of $f\in L^\infty$ is needed in the existence of a H\"older continuous absorbing set, while $f\in H^{1/2}$
is used in the Sobolev regularity estimate \eqref{eq:12}. Nonetheless, in the process to obtain  the $H^1$ regularity 
in Corollary \ref{cor:main}, the assumption that $f\in H^{1/2}$ seems to be quite sharp.
\end{remark}
This article is organized as follows. In the next Section \ref{sec:prel}, we recall a few facts about weak solutions to the
critically dissipative SQG equation, and set up the machinery of multivalued dynamical systems needed to analyze
their asymptotic behavior. Section \ref{sec:glob} is devoted to the proof of our main result, Theorem \ref{thm:main},
and Corollary \ref{cor:main}. In particular, we establish the existence of an absorbing set that is bounded in $H^{1/2}$. To make
the presentation self-contained, the paper
is complemented with two appendices. In the first Appendix \ref{sec:multiflow} we recall a few properties of multivalued semiflows
and their global attractors, while we prove a general Sobolev estimate in Appendix \ref{app:sob}.

\section{The SQG equation as a set-valued dynamical system}\label{sec:prel}
This section is devoted to the construction of the multivalued dynamical system $\{S(t)\}_{t\in\R^+}$ 
generated by a class of weak solutions
of \eqref{eq:qge1}. This is in fact a delicate step, that relies on the precise definition of vanishing viscosity solutions. We establish
translation and concatenation properties of vanishing viscosity solutions, and prove that the multivalued semiflow consists
of set-valued operators with closed graph. At the end, we recall a few known results from \cites{CD14,CCZV15} on the
dissipativity of $S(t)$.

\subsection{Preliminaries}
We define the two-dimensional torus as $\TT = (-\pi,\pi)^2$, and $C^\infty_{p}(\TT)$ as the space of restrictions to $\TT$ of smooth functions, $2\pi$-periodic in both variables, such that their mean value on $\TT$ is zero. In the sequel we will always use the shorthand notation for various function spaces defined on $\TT$, for example we will write $C^\infty$ for $C^\infty_{p}(\TT)$ and likewise $H^\sigma$ for the closure of $C^\infty$ in the $H^{\sigma}(\TT)$ norm. All spaces of functions defined on $\TT$ considered in the article are assumed to consist of functions which have zero mean value on $\TT$ and are $2\pi$-periodic with respect to both variables.

Given a distribution $\phi$ on $\TT$,  its Fourier coefficients are given by
$$
\widehat{\phi}_{k}=\frac{1}{(2\pi)^2}\int_{\TT} \phi(x)\e^{-ik\cdot x}\dd x\quad\text{\rm for} \quad k \in \ZZ^2_*,
$$
where $\mathbb{Z}^2_* = \mathbb{Z}^2\setminus \{0\}$. 
Using the Fourier expansion we can define the $L^2$-based Sobolev space $H^\sigma$ as 
$$
H^\sigma = \left\{ \phi\in \mathcal{D}'\,: \ \|\phi\|_{H^\sigma}^2=\sum_{k\in \mathbb{Z}^2_*} |k|^{2\sigma}|\widehat{\phi}_{k}|^2< \infty \right\}\quad \text{where}\quad \sigma\in \mathbb{R}.
$$
The scalar product in $L^2 = H^0$ will be denoted by $\l\cdot,\cdot\r$, and the same symbol is used for the duality between $H^{-\sigma}$ and $H^\sigma$. The fractional Laplacian is the nonlocal operator defined as
$$
(-\Delta)^\sigma \phi(x) = \sum_{k\in \mathbb{Z}^2_\star} |k|^{2\sigma} \widehat{\phi}_{k} \e^{ik\cdot x}\qquad \text{\rm or simply}\qquad 
\widehat{(-\Delta)^\sigma \phi}_{k} = |k|^{2\sigma} \widehat{\phi}_{k}.
$$
Note that the latter definition is valid also in the case when  $\phi$ (or $(-\Delta)^\sigma \phi$) is a periodic distribution 
not necessarily given by a function. Using the fractional Laplacian we can define equivalently the norm in $H^\sigma$ 
as $\|\phi\|_{H^{\sigma}} = \|(-\Delta)^{\sigma/2}\phi\|_{L^2}$. We define the Zygmund operator 
$\Lambda = (-\Delta)^{1/2}$, a linear and continuous operator from $H^{1/2}$ to its dual $H^{-1/2}$. 
In general, $\Lambda^{\sigma}$ is defined as $(-\Delta)^{\sigma/2}$, and for $\sigma\in (0,2)$ 
it has the real variable representation
 \begin{align*}
\Lambda^\sigma \phi(x)= c_\sigma \sum_{k \in \ZZ^2_*}  \int_{\TT}   \frac{\phi(x) - \phi(x+y)}{|y- 2\pi k|^{2+\sigma}} \dd y= c_\sigma \,\mathrm{P.V.}\int_{\R^2} \frac{\phi(x)-\phi(x+y)}{|y|^{2+\sigma}}\dd y, 
\end{align*} 
where $c_\sigma>0$ is a suitable normalization constant. Above and throughout the paper,  we will not distinguish between
functions on $\TT$ and their periodic extensions on $\R^2$.

The relation between $\u$ and $\theta$ in \eqref{eq:qge1} is given by the Riesz transform.
We define the $j$-th Riesz transform, where $j=1,2$, in terms of its Fourier coefficients as 
$$
\widehat{\mathcal{R}_j\phi}_k = \frac{i k_j}{|k|}\widehat{\phi}_k\quad\text{for}\quad k\in \mathbb{Z}^2_*. 
$$
We have 
$$
\mathcal{R}_j \phi = \de_{x_j} \Lambda^{-1}\phi.
$$
We define $\mathcal{R}^\perp = (-\mathcal{R}_2,\mathcal{R}_1)$, whereas we have $\mathcal{R}^\perp \phi = (-\partial_{x_2}\Lambda^{-1}\phi,\partial_{x_1}\Lambda^{-1}\phi)$. If $\phi \in H^{1/2}$, then 
$\mathcal{R}^\perp \phi \in H^{1/2}\times H^{1/2}$ is always divergence free.

\subsection{Vanishing viscosity solutions}
A weak solution to \eqref{eq:qge1} is a function
$$
\theta \in L^2_{loc}(0,\infty;H^{1/2}) \cap C_w([0,\infty);L^2)
$$
such that for any $\varphi\in C^\infty_0([0,\infty);C^\infty)$ there holds for every $T>0$
\begin{align}
&-\int_0^T\l\theta(t),\de_t\varphi(t)\r\, \dd t - \l\theta_0,\varphi(0)\r + \l\theta(T),\varphi(T)\r\nonumber\\
&\qquad\quad  - \int_0^T\l\theta (t), \u(t)\cdot\nabla\varphi(t)\r\dd t +\int_0^T \l\Lambda^{1/2}\theta(t),\Lambda^{1/2}\varphi(t)\r\, \dd t= \int_{0}^T\l f,\varphi(t)\r\, \dd t.\label{eq:very_weak}
\end{align}
We will work with a subclass of weak solutions, called vanishing viscosity solutions, defined as limit of the following auxiliary
problem. For $\eps>0$, consider the family of problems
\begin{equation}\label{eqn:problem_with_viscosity}
\de_t\theta^\eps - \eps \Delta \theta^\eps + \mathcal{R}^\perp \theta^\eps \cdot \nabla\theta^\eps + \Lambda \theta^\eps = f.
\end{equation}
We define the following class of vanishing viscosity solutions for the problem \eqref{eq:qge1}.
\begin{definition}\label{def:visco}
A weak solution (in the sense of \eqref{eq:very_weak})
$$
\theta\in L^2_{loc}(0,\infty;H^{1/2})\cap C_w([0,\infty);L^2)
$$ 
is a vanishing viscosity solution of \eqref{eq:qge1} if there exist
a sequence $\eps_n\searrow 0$ and corresponding solutions $\theta^{\eps_n}$ to \eqref{eqn:problem_with_viscosity}
with $\eps=\eps_n$ such that $\theta^{\eps_n}\to \theta$ in 
$C_w([0,T];L^2)$ for every $T>0$ and  $\theta^{\eps_n}(0)\to \theta(0)$ strongly in $L^2$.
\end{definition}  

Following the lines of \cite{CV10}*{Appendix C}, it is standard to prove that given $\theta_0\in L^2$ and $f\in L^\infty$, 
a (possibly non-unique) vanishing viscosity solution of \eqref{eq:qge1}
exists. Remarkably, it was shown in \cite{CD14} that all vanishing viscosity solutions are instantaneously
in $L^\infty$ (see \cite{CV10} for the unforced case) and satisfy the energy equation
\begin{equation}\label{eq:energy}
\frac{1}{2}\|\theta(t)\|_{L^2}^2 + \int_s^t \|\Lambda^{1/2}\theta(\tau)\|_{L^2}^2\, \dd\tau = \frac{1}{2}\|\theta(s)\|_{L^2}^2 + \int_s^t\l f,\theta(\tau)\r\, \dd\tau,
\end{equation}
for all $0\leq s\leq t$. This is connected to the absence of anomalous dissipation in the critical case, as
proven in \cite{CTV14}.
A straightforward yet very important consequence is that vanishing viscosity solutions are strongly
continuous. 

\begin{lemma}\label{lem:continuous}
Let $\theta$ be a vanishing viscosity  solution of \eqref{eq:qge1} with $\theta_0\in L^2$ and $f\in L^\infty$. 
Then $\theta\in C([0,\infty);L^2)$.
\end{lemma}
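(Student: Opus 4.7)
The plan is to combine the weak continuity statement $\theta \in C_w([0,\infty);L^2)$, which is already built into the definition of a weak solution, with continuity of the $L^2$-norm derived from the energy equality \eqref{eq:energy}. This is the classical Lions--Magenes style trick: in a Hilbert space, weak convergence together with convergence of norms upgrades to strong convergence.

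First, I would fix $t_0 \geq 0$ and consider the function $t \mapsto \|\theta(t)\|_{L^2}^2$. Setting $s = t_0$ in \eqref{eq:energy} (or, for $t < t_0$, setting $t = t_0$ and viewing $s = t$), the energy equality gives
\begin{equation*}
\tfrac{1}{2}\|\theta(t)\|_{L^2}^2 = \tfrac{1}{2}\|\theta(t_0)\|_{L^2}^2 - \int_{t_0}^{t} \|\Lambda^{1/2}\theta(\tau)\|_{L^2}^2\, \dd\tau + \int_{t_0}^{t}\langle f,\theta(\tau)\rangle\, \dd\tau.
\end{equation*}
Since $\theta \in L^2_{loc}(0,\infty;H^{1/2})$ and $f \in L^\infty \subset L^2$ with $\theta \in L^\infty_{loc}(0,\infty;L^2)$ (by weak continuity and local boundedness), both integrands are locally integrable on $[0,\infty)$. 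Hence the two integrals are absolutely continuous functions of $t$, and so $t\mapsto \|\theta(t)\|_{L^2}^2$ is continuous at $t_0$.

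Next, take any sequence $t_n \to t_0$. Weak continuity yields $\theta(t_n) \rightharpoonup \theta(t_0)$ in $L^2$, while the previous step gives $\|\theta(t_n)\|_{L^2} \to \|\theta(t_0)\|_{L^2}$. In the Hilbert space $L^2$, weak convergence plus norm convergence implies strong convergence, so $\theta(t_n) \to \theta(t_0)$ in $L^2$. Since $t_0 \geq 0$ was arbitrary, this gives $\theta \in C([0,\infty);L^2)$.

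I do not expect a serious obstacle here: the only subtlety is making sure that the energy equality is available at the endpoint $t_0 = 0$ (where one must be careful that the strong convergence $\theta^{\eps_n}(0) \to \theta(0)$ in $L^2$ required by Definition \ref{def:visco} ensures that the initial value used in \eqref{eq:energy} is indeed the initial datum $\theta_0$, and not merely a weak limit). Granted this, the proof is essentially a two-line observation once the energy equality is in hand.
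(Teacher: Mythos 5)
Your argument is exactly the paper's proof: both use the energy equality \eqref{eq:energy} to deduce continuity of $t\mapsto\|\theta(t)\|_{L^2}$, and then upgrade the weak continuity $\theta\in C_w([0,\infty);L^2)$ to strong continuity via the standard Hilbert-space fact that weak convergence plus norm convergence implies strong convergence. The proposal is correct; you merely spell out the absolute-continuity of the integral terms, which the paper leaves implicit.
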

\begin{proof}
We know that $\theta\in C_w([0,\infty);L^2)$, whereas for $\{t_n\}\subset [0,\infty)$ and $t\in [0,\infty)$ such that $t_n\to t$ we have $\theta(t_n)\to \theta(t)$ weakly in $L^2$. The energy equation \eqref{eq:energy} implies that $\|\theta(t_n)\|_{L^2}\to \|\theta(t)\|_{L^2}$ as $n\to \infty$ and it follows that $\theta(t_n)\to \theta(t)$ strongly in $L^2$. The proof is complete. 
\end{proof}

\begin{remark}
In fact, only $f\in L^p$ for some $p>2$ is needed in the argument of \cite{CD14}. The assumption that $f\in L^\infty$ will
be used later, but we decided to impose this condition from the very beginning for the sake of clarity.
\end{remark}

\subsection{The SQG equation as an m-semiflow}
When studying the long-time dynamics of globally well-posed differential systems, a widely used approach
consists in the analysis of certain invariant sets 
of the corresponding semigroup of solution operators $S(t)$ acting on a Banach space $X$, or, more generally, on
a complete metric space (classical references are \cites{BV92,CV02,Hale88,Robinson01,SY02,Temam97}).
When uniqueness of solution is not known, the above theory is clearly not applicable, and several other
approaches have been developed in the past two decades \cites{Ball97, CV97, Cheskidov09, MV98}. 
We adopt here the approach introduced by Melnik and Valero in \cite{MV98}, which revolves
around the concept of multivalued semiflow (see Appendix \ref{sec:multiflow} for the main concepts).

The m-semiflow governed by the vanishing viscosity solutions of the surface quasi-geostrophic equation is
defined as the family $\{S(t)\}_{t\in\R^+}$ of set-valued solution operators $S(t):L^2\to \mathcal{P}(L^2)$ such that
\begin{equation}\label{eq:qge_multi}
S(t)\theta_0 = \left\{ \theta(t) :\theta  \mbox{ is a  solution given by Definition \ref{def:visco} with }  \theta(0)=\theta_0  \right\}.
\end{equation}
As a first step, we prove that $\{S(t)\}_{t\in\R^+}$ is indeed an m-semiflow, in the sense of Definition \ref{def:multi}.

\begin{lemma}\label{lem:m_semiflow}
Let $f\in L^\infty$. The family $\{S(t)\}_{t\in\R^+}$ given by \eqref{eq:qge_multi} is an m-semiflow.
\end{lemma}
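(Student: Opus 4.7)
The plan is to verify the two defining axioms of an m-semiflow (cf.\ Definition~\ref{def:multi}): the identity property $S(0)\theta_0=\{\theta_0\}$ for every $\theta_0\in L^2$, and the inclusion $S(t+s)\theta_0\subset S(t)S(s)\theta_0$ for all $s,t\ge 0$. The first is immediate from Definition~\ref{def:visco}, since every vanishing viscosity solution satisfies $\theta(0)=\theta_0$ by construction, and conversely the constant-in-time approximation $\theta^{\eps_n}\equiv\theta_0$ trivially makes $\theta_0$ a vanishing viscosity value at $t=0$.

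For the second axiom, I would fix $\eta\in S(t+s)\theta_0$, pick a vanishing viscosity solution $\theta$ with $\theta(0)=\theta_0$ and $\theta(t+s)=\eta$, and let $\{\theta^{\eps_n}\}$ be its approximating sequence from~\eqref{eqn:problem_with_viscosity}. Setting $\xi:=\theta(s)$, one has automatically $\xi\in S(s)\theta_0$ (witnessed by $\theta$ itself), so it suffices to prove that the shifted function $\tilde\theta(\tau):=\theta(\tau+s)$ is again a vanishing viscosity solution starting at $\xi$; that gives $\eta=\tilde\theta(t)\in S(t)\xi\subset S(t)S(s)\theta_0$. The natural candidate for the approximating sequence of $\tilde\theta$ is $\tilde\theta^{\eps_n}(\tau):=\theta^{\eps_n}(\tau+s)$, and because \eqref{eqn:problem_with_viscosity} is autonomous (the force $f$ is time-independent), each $\tilde\theta^{\eps_n}$ remains a solution of the viscous problem. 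The $C_w([0,T];L^2)$ convergence $\tilde\theta^{\eps_n}\to\tilde\theta$ for every $T>0$ is inherited directly from the convergence of $\theta^{\eps_n}$ on $[0,T+s]$.

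The main obstacle is to verify the remaining clause of Definition~\ref{def:visco}, namely the strong $L^2$-convergence at the new initial time,
\begin{equation*}
\theta^{\eps_n}(s)\longrightarrow \theta(s)\quad\text{strongly in }L^2.
\end{equation*}
From $C_w$-convergence one only has $\theta^{\eps_n}(s)\rightharpoonup\theta(s)$ in $L^2$, so I would upgrade this using energy. Testing \eqref{eqn:problem_with_viscosity} against $\theta^{\eps_n}$ gives the viscous energy identity
\begin{equation*}
\tfrac12\|\theta^{\eps_n}(s)\|_{L^2}^2 + \int_0^s\|\Lambda^{1/2}\theta^{\eps_n}\|_{L^2}^2\,\dd\tau + \eps_n\!\int_0^s\|\nabla\theta^{\eps_n}\|_{L^2}^2\,\dd\tau = \tfrac12\|\theta^{\eps_n}(0)\|_{L^2}^2 + \int_0^s\l f,\theta^{\eps_n}\r\dd\tau,
\end{equation*}
which I would combine with the limiting energy equation~\eqref{eq:energy} applied on $[0,s]$. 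The strong convergence $\theta^{\eps_n}(0)\to\theta_0$ handles the initial term, the uniform bound yields $\theta^{\eps_n}\rightharpoonup\theta$ in $L^2(0,s;H^{1/2})$ (so the linear $f$-term passes to the limit), and weak lower semicontinuity controls the dissipation from below. Taking $\limsup$ on the left yields $\limsup_n\|\theta^{\eps_n}(s)\|_{L^2}\le\|\theta(s)\|_{L^2}$, which together with weak convergence gives the required strong convergence. This is precisely the point where the absence of anomalous dissipation in the critical regime (cf.~\cite{CTV14}, exploited already in~\eqref{eq:energy}) is essential; the rest of the verification is bookkeeping.
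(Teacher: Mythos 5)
Your proof is correct, and its overall architecture (shift the solution and its viscous approximating sequence in time; reduce everything to the strong $L^2$ convergence $\theta^{\eps_n}(s)\to\theta(s)$ at the intermediate time) is the same as the paper's. However, you handle that crucial step by a genuinely different argument. You take the $\limsup$ in the viscous energy identity at the fixed time $s$, discard the nonnegative $\eps_n$-term, use weak lower semicontinuity for the $H^{1/2}$ dissipation, and then invoke the energy \emph{equality} \eqref{eq:energy} for the limit solution on $[0,s]$ to conclude $\limsup_n\|\theta^{\eps_n}(s)\|_{L^2}\le\|\theta(s)\|_{L^2}$, which together with weak convergence upgrades to strong convergence. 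The paper instead bounds $\de_t\theta^{\eps}$ in $L^1(0,T;H^{-1})$ via the nonlinear estimate \eqref{eq:nonlinear_estimate}, applies Aubin--Lions to get strong convergence at almost every time, and then sandwiches the fixed time $s$ between such times using the monotonicity of $V_\eps(r)=\tfrac12\|\theta^\eps(r)\|_{L^2}^2-\int_0^r\l f,\theta^\eps\r\dd\tau$ and the continuity of the limiting $V$ (Lemma \ref{lem:continuous}). Both routes ultimately rest on the energy equality from \cite{CD14} --- yours directly, the paper's through Lemma \ref{lem:continuous} --- but yours is shorter, avoids the Aubin--Lions compactness step and the passage to a further subsequence, and does not need the a.e.-time sandwich. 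The paper's version has the marginal advantage of only using continuity of $t\mapsto\|\theta(t)\|_{L^2}$ rather than the exact energy balance at time $s$, which would matter in settings where one only knows an energy inequality for the limit; here that distinction is moot.

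One small blemish: in your verification of $S(0)\theta_0=\{\theta_0\}$, the remark that ``the constant-in-time approximation $\theta^{\eps_n}\equiv\theta_0$ trivially makes $\theta_0$ a vanishing viscosity value at $t=0$'' is not right as stated (a time-constant function does not solve \eqref{eqn:problem_with_viscosity}) and is also unnecessary: $S(0)\theta_0=\{\theta_0\}$ follows from the fact that every vanishing viscosity solution with datum $\theta_0$ takes the value $\theta_0$ at $t=0$, together with nonemptiness of the solution set. This does not affect the validity of the rest of the argument.
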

\begin{proof}
The fact that of $S(t)\theta_0$ is a nonempty set follows from 
the existence of vanishing viscosity solutions for each $\theta_0\in L^2$. 
Moreover, the assertion $(i)$ of Definition \ref{def:multi} is clear as we can pass with $T$ to zero in \eqref{eq:very_weak}. 
We must show that $S(s+t)\theta_0\subset S(s)S(t)\theta_0$ for $s,t>0$ and $\theta_0\in L^2$.
To this end, take $\xi\in S(s+t)\theta_0$. There exists a vanishing viscosity solution $\theta$
such that $\theta(0)=\theta_0$ and $\theta(s+t)=\xi$. Define $\eta=\theta(t)$. 
Then $\eta\in S(t)\theta_0$. We must prove that $\xi\in S(s)\eta$, i.e., that there exists a vanishing viscosity weak solution $\overline{\theta}$ such that $\overline{\theta}(0) = \eta$ and $\overline{\theta}(s)=\xi$. 
For $\tau\in [0,\infty)$, define 
$$
\overline{\theta}(\tau) = \theta(\tau+t).
$$ 
Subtracting \eqref{eq:very_weak} written for final times $t$ and $T>t$, we find that $\overline{\theta}$ satisfies \eqref{eq:very_weak} with the initial condition $\eta$. It remains to show that $\overline{\theta}$ is the limit in $C_w([0,T];L^2)$, for each $T>0$,  
of solutions of the auxiliary problems \eqref{eqn:problem_with_viscosity} with the initial conditions 
converging strongly in $L^2$. As $\theta$ is the vanishing 
 viscosity solution, there exist sequences $\eps\to 0$ and $\theta^\eps\to \theta$ in $C_w([0,T];L^2)$ 
 such that \eqref{eqn:problem_with_viscosity} holds. Naturally, $\overline{\theta}$ is the limit of the time-translated
 approximated sequence of $\theta^\eps$. Indeed, setting
 $$
 \overline{\theta}^\eps(\tau)=\theta^\eps(\tau+t), \qquad \tau\in [0,\infty),
 $$ 
 we readily see that $\overline{\theta}^\eps$ satisfies \eqref{eqn:problem_with_viscosity} 
 for a.e. $\tau>0$ and $\overline{\theta}^\eps\to\overline{\theta}$ in $C_w([0,T-t];L^2)$ 
 we only need to prove that $\overline{\theta}^\eps(0)=\theta^\eps(t)\to \theta(t) = \overline{\theta}(0)$ 
 strongly in $L^2$. Multiplying \eqref{eqn:problem_with_viscosity} by $\theta^\eps$ and integrating 
 in $\mathbb{T}^2$ we get
\begin{equation}\label{eqn:energy_with_epsilon}
\frac{1}{2}\ddt\|\theta^\eps\|^2_{L^2} + \eps\|\theta^\eps\|_{H^1}^2 +\|\theta^\eps\|^2_{H ^{1/2}} = \l f,\theta^\eps\r,\qquad\text{\rm for a.e. } t\in (0,T).
\end{equation}
It follows by the Poincar\'e inequality that $\theta^\eps$ is bounded in $L^2(0,T;H^{1/2})$ independent of $\eps$, and
the same holds for $\sqrt\eps \theta^\eps$ in $L^2(0,T;H^1)$. Thus, from the estimate 
\begin{equation}\label{eq:nonlinear_estimate}
\left| \int_{\TT}\theta^\eps \u^\eps\cdot \nabla \varphi\, \dd x\right| \lesssim \|\theta^\eps\|_{L^4} \|\u^\eps\|_{L^4}\|\varphi\|_{H^1}\lesssim  \|\theta^\eps\|^2_{L^4} \|\varphi\|_{H^1} \lesssim \|\theta^\eps\|^2_{H^{1/2}} \|\varphi\|_{H^1},
\end{equation}
and \eqref{eqn:problem_with_viscosity}, we deduce that $\de_t\theta^\eps$ is uniformly bounded in $L^1(0,T;H^{-1})$. 
Hence by the Aubin--Lions theorem it follows that $\theta^\eps(r)\to \theta(r)$ strongly in $L^2$ for a.e. $r\in (0,T)$, 
possibly passing to a further subsequence. We take sequences $r_k\searrow t$ and $s_k\nearrow t$ such that 
this strong convergence holds.
Define 
$$
V_\eps(r) = \frac{1}{2}\|\theta^\eps(r)\|_{L^2}^2-\int_0^r\l f,\theta^\eps(s)\r \dd s
$$ 
and
$$
V(r) = \frac{1}{2}\|\theta(r)\|_{L^2}^2-\int_0^r\l f,\theta(s)\r \dd s.
$$ 
By \eqref{eqn:energy_with_epsilon} we have 
$$
V_\eps(r_k) \geq V_\eps(t)\geq V_\eps(s_k). 
$$
Passing to the limit as $\eps\to 0$ we get
$$
V(r_k) \geq \limsup_{\eps\to 0} V_\eps(t) \geq \liminf_{\eps\to 0}V_\eps(t) \geq V(s_k).
$$
Now we can take  $k$ to infinity and use the fact that $V$ is a continuous function (cf. Lemma \ref{lem:continuous})
so that we can infer that $\|\theta^\eps(t)\|_{L^2}\to \|\theta(t)\|_{L^2}$ and the proof is complete.
\end{proof}

In the next result we prove that the m-semiflow is $t_*$-closed (see Definition \ref{def:star}).

\begin{lemma}\label{lem:closed}
	Let $f\in L^\infty$. The m-semiflow given by \eqref{eq:qge_multi} is $t_*$-closed for all $t_*>0$.
\end{lemma}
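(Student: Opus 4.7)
The plan is to prove closedness by a diagonal argument on the viscous approximations. Assuming $t_*$-closedness is the standard graph-closure property---if $\theta_0^n \to \theta_0$ strongly in $L^2$ and $\xi^n \in S(t_*)\theta_0^n$ with $\xi^n \to \xi$ in $L^2$, then $\xi \in S(t_*)\theta_0$---I start from such sequences and, for each $n$, fix a vanishing viscosity solution $\theta^n$ realizing $\theta^n(0)=\theta_0^n$ and $\theta^n(t_*)=\xi^n$. By Definition \ref{def:visco}, each $\theta^n$ is itself the $C_w([0,T];L^2)$-limit, for every $T>0$, of solutions $\theta^{n,\eps}$ of \eqref{eqn:problem_with_viscosity} along some sequence $\eps \searrow 0$ with $\theta^{n,\eps}(0) \to \theta_0^n$ strongly in $L^2$. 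The goal is then to select $\eps_n \searrow 0$ such that $\theta^{n,\eps_n}$ produces, after a subsequence, a vanishing viscosity solution $\theta$ with $\theta(0)=\theta_0$ and $\theta(t_*)=\xi$.

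The next step is to collect uniform estimates. Since $\|\theta_0^n\|_{L^2}$ is eventually uniformly bounded, the energy identity \eqref{eqn:energy_with_epsilon} and the Poincar\'e inequality provide bounds on $\theta^{n,\eps}$ in $L^\infty(0,T;L^2)\cap L^2(0,T;H^{1/2})$ and on $\sqrt{\eps}\,\theta^{n,\eps}$ in $L^2(0,T;H^1)$ that are independent of both $n$ and $\eps$. Combining \eqref{eq:nonlinear_estimate} with \eqref{eqn:problem_with_viscosity} gives a uniform-in-$(n,\eps)$ bound on $\de_t\theta^{n,\eps}$ in $L^1(0,T;H^{-1})$. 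The Aubin--Lions lemma then yields strong compactness in $L^2(0,T;L^2)$ and, via the bound on the time derivative, compactness in $C_w([0,T];L^2)$, for every $T>0$.

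Now comes the diagonal extraction, which is the main obstacle: $\theta^n$ is only a subsequential $C_w$-limit of its own viscous approximations, so one cannot simply let $n,\eps$ tend to their limits independently. I exploit that on bounded sets of $L^2$ the weak topology is metrizable, inducing a metric on $C_w([0,T];L^2)$ and, by exhaustion with $T=k\in \N$, on $C_w([0,\infty);L^2)$. For each $n$ I choose $\eps_n \in (0,1/n)$ so small that the viscous approximation $\theta^{n,\eps_n}$ is within distance $1/n$ of $\theta^n$ in $C_w([0,n];L^2)$ and satisfies $\|\theta^{n,\eps_n}(0)-\theta_0^n\|_{L^2}<1/n$. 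This forces $\theta^{n,\eps_n}(0)\to \theta_0$ strongly in $L^2$, and, by the uniform bounds and compactness above, yields a subsequence along which $\theta^{n,\eps_n}\to \theta$ in $C_w([0,T];L^2)$, strongly in $L^2(0,T;L^2)$, and weakly in $L^2(0,T;H^{1/2})$, for every $T>0$. Standard passage to the limit in \eqref{eqn:problem_with_viscosity} against smooth test functions (the nonlinear term is handled via \eqref{eq:nonlinear_estimate} together with the strong $L^2$ convergence) shows that $\theta$ satisfies the weak formulation \eqref{eq:very_weak} with $\theta(0)=\theta_0$, and by construction it is a vanishing viscosity solution.

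Finally, the identification $\theta(t_*)=\xi$ follows from weak-$L^2$ continuity: the diagonal choice gives $\theta^{n,\eps_n}(t_*)\rightharpoonup \theta(t_*)$ in $L^2$, while on the other hand $\|\theta^{n,\eps_n}(t_*)-\theta^n(t_*)\|$ in the metric of $C_w([0,n];L^2)$ is at most $1/n$ and $\theta^n(t_*)=\xi^n\to \xi$ strongly in $L^2$, so both expressions must coincide. Hence $\xi \in S(t_*)\theta_0$, proving $t_*$-closedness. If the definition requires only strong convergence of the $\xi^n$, Lemma \ref{lem:continuous} and the energy equation \eqref{eq:energy} promote the weak convergence of $\theta^{n,\eps_n}(t_*)$ to strong convergence, closing the argument in either formulation.
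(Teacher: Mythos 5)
Your proposal is correct and follows essentially the same strategy as the paper's proof: uniform energy and $L^1(0,t_*;H^{-1})$ bounds, Aubin--Lions compactness, metrizability of $C_w([0,T];L^2)$ on $L^2$-balls, and a diagonal selection $\eps_n\searrow 0$ with $\dd_w(\theta^{n,\eps_n},\theta^n)\leq 1/n$ and $\|\theta^{n,\eps_n}(0)-\theta_0^n\|_{L^2}\leq 1/n$. The only (cosmetic) difference is the order of operations---the paper first passes to the limit in the inviscid weak formulation along $\theta^n$ to produce $\theta$ and then shows the diagonal viscous sequence converges to the same $\theta$, whereas you extract $\theta$ directly from the diagonal viscous sequence, which forces you to also dispose of the $\eps_n\Delta\theta^{n,\eps_n}$ term (harmless, given the $\sqrt{\eps}$-weighted $H^1$ bound).
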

\begin{proof}
	Fix any $t_*>0$. Let $\theta_0^n\to \theta_0$ strongly in $L^2$ and $\eta_n\in S(t_*)\theta_0^n$ 
	be such that $\eta_n\to \eta$ strongly in $L^2$. We must show that $\eta\in S(t_*)\theta_0$. There 
	exists a sequence $\theta^n$ of  vanishing viscosity solutions such that $\theta^n(0) = \theta_0^n$ 
	and $\theta^n(t_*) = \eta_n$. The energy equation \eqref{eq:energy} implies that the sequence 
	$\theta^n$ is bounded in $L^2(0,t_*;H^{1/2})$.
	Taking $\varphi\in C^\infty_0((0,t_*)\times \TT)$ in \eqref{eq:very_weak} we get
	$$
	-\int_0^{t_*}\l\theta^n(t),\de_t\varphi(t)\r\, \dd t =  \int_0^{t_*}\l\theta^n(t), \u(t)\cdot\nabla\varphi(t)\r\dd t -\int_0^{t_*} \l\Lambda^{1/2}\theta^n(t),\Lambda^{1/2}\varphi(t)\r\, \dd t+ \int_{0}^{t_*}\l f,\varphi(t)\r\, \dd t.
	$$
	The right-hand side of the above expression can be estimated from above by
	\begin{align*}
	&\int_0^{t_*}\|\theta^n(t)\|_{L^4} \|\u(t)\|_{L^4} \|\nabla\varphi(t)\|_{L^2}\,\dd t +\int_0^{t_*} \|\theta^n(t)\|_{H^{1/2}}\|\varphi(t)\|_{H^{1/2}}\, \dd t+ \int_{0}^{t_*}\| f\|_{L^2}\|\varphi(t)\|_{L^2}\, \dd t\\
	&\qquad \lesssim \|\theta^n\|^2_{L^2(0,t_*;H^{1/2})}\|\varphi\|_{L^\infty(0,t_*;H^1)} + \|\theta^n\|_{L^2(0,t_*;H^{1/2})}\|\varphi\|_{L^2(0,t_*;H^{1/2})}+\sqrt{t_*}\|f\|_{L^2}\|\varphi\|_{L^2(0,t_*;L^2)}.
	\end{align*}
	It follows that the distributional derivatives $\partial_t\theta^n$ belong to $L^1(0,t_*;H^{-1})$ and are uniformly bounded in that space.
	Hence, for a subsequence, not renumbered, we have $\theta^n\to \theta$ weakly in $L^2(0,t_*;H^{1/2})$ and $\partial_t\theta^n\to \partial_t \theta$ weakly in $L^1(0,t_*;H^{-1})$, for some function $\theta \in L^2(0,t_*;H^{1/2})$. 
	We can pass to the limit in all terms in \eqref{eq:very_weak} written for $\theta^n$, whereas $\theta$ satisfies \eqref{eq:very_weak} for $T=t_*$. Moreover, $\theta^n\to \theta$ in $C_w([0,t_*];L^2)$, and, as $\theta^n(t_*) \to \theta(t_*)$ weakly in $L^2$, we have $\eta=\theta(t_*)$. We must prove that $\theta$ can be obtained by a vanishing viscosity procedure on $(0,t_*)$. For each fixed $n\in \mathbb{N}$, there exist a sequence $\{\theta^{n,k}\}_{k\in\N}$ of functions satisfying \eqref{eqn:problem_with_viscosity} with $\varepsilon_k\to 0$ as $k\to\infty$ such that $\theta^{n,k}\to \theta^n$ in $C_w([0,t_*];L^2)$ and $\theta^{n,k}(0)\to \theta^n(0)$ strongly in $L^2$ as $k\to\infty$. The energy relations
	\begin{align*}
	&\frac{1}{2}\|\theta^{n}(t)\|^2_{L^2} + \int_0^t\|\theta^{n}(s)\|^2_{H^{1/2}}\, \dd s = \int_0^t\l f, \theta^{n}(s)\r \, \dd s + \frac{1}{2}\|\theta^{n}(0)\|^2_{L^2},\qquad\text{\rm for all }  t\in [0,t_*],\\
	&\frac{1}{2}\|\theta^{n,k}(t)\|^2_{L^2} + \int_0^t\|\theta^{n,k}(s)\|^2_{H^{1/2}}\, \dd s \leq  \int_0^t\l f, \theta^{n,k}(s) \r\, \dd s + \frac{1}{2}\|\theta^{n,k}(0)\|^2_{L^2},\qquad\text{\rm for all } t\in [0,t_*],
	\end{align*}
	imply that for every $n$ there exists $k_0(n)$ such that 
	\begin{equation}\label{eq:ball}
	\{\theta(t), \theta^n(t), \theta^{n,k}(t)\, :\ t\in [0,t_*], n\in \mathbb{N}, k\geq k_0(n)\} \subset B_R,
	\end{equation}
	where $R>0$ is big enough and $B_R$ denotes the closed ball in $L^2$ centered at 0 with radius $R$. 
	Notice that the topology of $C_w([0,t_*];B_R)$ is metrizable, we  corresponding metric denoted by $\dd_w$. For every $n$ we choose $k(n)\geq k_0(n)$ such that 
	$$
	\dd_w(\theta^{n,k(n)},\theta^n)\leq \frac{1}{n} \quad \text{and}\quad \|\theta^{n,k(n)}(0) - \theta^n(0)\|_{L^2}\leq \frac{1}{n}.
	$$ 
	Clearly $\theta^{n,k(n)}\to \theta$ in $C_w([0,t_*];L^2)$ and $\theta^{n,k(n)}(0)\to \theta(0)$ strongly in $L^2$. We have proved that $\theta$ is a vanishing viscosity solution on $[0,t_*]$. 	
	Since every $\theta^{n,k(n)}$ can be elongated to $[0,T]$ for arbitrary $T>t_*$ using a diagonal argument it follows that the convergence $\theta^{n,k(n)}\to \theta$ holds in $C_w([0,T];L^2)$ for all $T>0$, $\theta$ being a vanishing viscosity solution, and the proof is complete.
\end{proof}
Note that the above proof requires the strong convergence of initial conditions in the definition of the vanishing viscosity solution. Without this strong convergence we could not find the ball $B_R$ in \eqref{eq:ball}.

As a matter of fact, the m-semiflow turns out to be strict. However, notice that we have to strengthen the assumptions on the
forcing term, as we will have to exploit further parabolic regularization of solutions. The regularization follows by consecutive application of several bootstrapping results (also see Sections \ref{sec:holder} and \ref{sec:glob} below):
\begin{itemize}
	\item if the initial condition $\theta_0$ belongs to $L^2$ and $f\in L^p$ for some $p>2$, by \cite{CD14}*{Lemma 2.3} 
	$\theta(t)$ is uniformly bounded in $L^\infty$ for $t>\varepsilon$;
	\item if the initial condition $\theta_0$ belongs to $L^\infty$ and $f\in L^\infty$, by \cite{CCZV15}*{Theorem 4.2 and Remark 4.8} it follows that $\theta(t)$ is uniformly bounded in $C^\beta$ for certain $\beta=\beta(\|\theta_0\|_{L^\infty},\|f\|_{L^\infty})\in (0,1/4]$ for $t>\varepsilon$;
	\item if $f\in H^{1/2}$ then the uniform bound on $\theta(t)$ in $C^\beta$ implies the uniform bound on $\theta(t)$ in $H^{1/2}$ for $t> \varepsilon$ by Theorem \ref{prop:sob} below;
	\item if $\theta_0 \in H^{1/2}$ and $f\in L^\infty \cap H^{1/2}$ then 
	the argument of \cite{CTV15}*{Theorem 5.2}, up to a slight modification of the treatment of the 
	forcing term, implies that $\theta(t)$ becomes uniformly bounded in $H^1$ for $t>\varepsilon$.
	We discuss this modification in Section \ref{sub:smoothnes} below. 
\end{itemize}
Note that from the uniform bounds which hold for the approximate sequence of solutions to \eqref{eqn:problem_with_viscosity} one can always obtain the corresponding bounds for the solutions of \eqref{eq:very_weak}.   
Now, for the initial condition in $H^1$ the vanishing viscosity weak solution is unique (this property follows by the weak--strong uniqueness result obtained in \cite{CTV15}*{Theorem 4.4}) and the semiflow becomes single valued. Combining these results we get that if the initial condition is in $L^2$ then, instantaneously, the trajectory becomes bounded in $H^1$. In particular, if a trajectory is given on a time interval $[0,\varepsilon)$ for some small $\varepsilon>0$, then its continuation to the whole interval $[0,\infty)$ is defined uniquely. This fact is used in the following lemma.

\begin{lemma}\label{lem:strict}
Let $f\in H^{1/2}\cap L^\infty$. Then the m-semiflow $\{S(t)\}_{t\in \R^+}$ defined by \eqref{eq:qge_multi} is strict.
\end{lemma}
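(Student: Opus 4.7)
From Lemma \ref{lem:m_semiflow} we already know the inclusion $S(s+t)\theta_0\subset S(s)S(t)\theta_0$, so strictness reduces to establishing the reverse inclusion $S(s)S(t)\theta_0\subset S(s+t)\theta_0$ for all $s,t>0$. Fix $\theta_0\in L^2$ and $\xi\in S(s)S(t)\theta_0$, i.e., pick $\eta\in S(t)\theta_0$ with $\xi\in S(s)\eta$. By definition of $S$, there are vanishing viscosity solutions $\theta_1,\theta_2$ defined on $[0,\infty)$ with
$$
\theta_1(0)=\theta_0,\quad \theta_1(t)=\eta,\qquad \theta_2(0)=\eta,\quad \theta_2(s)=\xi.
$$
The goal is to glue $\theta_1$ and $\theta_2$ into a single vanishing viscosity solution connecting $\theta_0$ to $\xi$ in time $s+t$; equivalently, it suffices to show $\theta_1(s+t)=\xi$.

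The decisive point is that, since $t>0$ and $f\in L^\infty\cap H^{1/2}$, the bootstrap chain summarized in the bullet list preceding the lemma (the $L^\infty$ estimate of \cite{CD14}, the H\"older estimate of \cite{CCZV15}, Theorem \ref{prop:sob}, and finally \cite{CTV15}*{Theorem 5.2}) forces $\theta_1(t)\in H^1$, i.e., $\eta\in H^1$. Hence we are in the regime where the weak-strong uniqueness theorem \cite{CTV15}*{Theorem 4.4} applies at initial time $\eta$: any weak solution of \eqref{eq:qge1} starting from an $H^1$ datum is unique in the class of weak solutions satisfying the energy inequality.

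Consider now the time-shifted trajectory $\tilde\theta_1(\tau):=\theta_1(t+\tau)$ for $\tau\geq 0$. Exactly the translation argument carried out in the proof of Lemma \ref{lem:m_semiflow} shows that $\tilde\theta_1$ is itself a vanishing viscosity solution, with $\tilde\theta_1(0)=\eta$. Both $\tilde\theta_1$ and $\theta_2$ are therefore vanishing viscosity solutions issuing from the same $H^1$ initial datum $\eta$ and both satisfy the energy equality \eqref{eq:energy}. Weak-strong uniqueness yields $\tilde\theta_1\equiv\theta_2$ on $[0,\infty)$, and in particular $\theta_1(s+t)=\tilde\theta_1(s)=\theta_2(s)=\xi$. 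Since $\theta_1$ is a vanishing viscosity solution with $\theta_1(0)=\theta_0$, this gives $\xi\in S(s+t)\theta_0$, as required.

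The main obstacle is really the step $\eta\in H^1$: without the strengthened hypothesis $f\in H^{1/2}$ (used in Theorem \ref{prop:sob}) one could only push the parabolic smoothing up to $H^{1/2}$, which is not enough to invoke weak-strong uniqueness. Once $H^1$ regularity at positive time is available, the gluing is forced automatically and no further analysis of the concatenated viscous approximations is needed.
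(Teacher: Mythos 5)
Your argument is correct, and it takes a somewhat cleaner route than the paper's. Both proofs rest on the same three ingredients --- the time-translation property established in the proof of Lemma \ref{lem:m_semiflow}, the instantaneous $H^1$ regularization giving $\eta=\theta_1(t)\in H^1$ for $t>0$, and the weak--strong uniqueness of \cite{CTV15}*{Theorem 4.4} --- but they deploy them differently. The paper explicitly constructs the concatenation $\theta(r)=\theta_1(r)$ for $r<t$ and $\theta(r)=\theta_2(r-t)$ for $r\geq t$, and must then verify that this glued function is itself a vanishing viscosity solution; this forces a return to the approximating sequence $\theta_1^\eps$, the strong convergence $\theta_1^\eps(t)\to\eta$ in $L^2$, and a diagonal argument to identify the limit of the shifted approximations with $\theta_2$. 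You avoid the concatenation entirely: since the shift $\tilde\theta_1(\cdot)=\theta_1(t+\cdot)$ is already known (from Lemma \ref{lem:m_semiflow}) to be a vanishing viscosity solution with datum $\eta\in H^1$, weak--strong uniqueness forces $\tilde\theta_1\equiv\theta_2$, and then $\theta_1$ itself witnesses $\xi=\theta_1(s+t)\in S(s+t)\theta_0$. This buys a shorter proof with no new compactness or diagonal argument; what it gives up is only the explicit exhibition of the concatenated trajectory as a vanishing viscosity solution, which in your setup is subsumed by the identity $\tilde\theta_1=\theta_2$. One point worth stating explicitly in a full write-up: weak--strong uniqueness requires the weak competitor to satisfy the energy inequality and the strong solution issuing from $\eta\in H^1$ to exist globally; both facts are available here (vanishing viscosity solutions satisfy the energy equality \eqref{eq:energy}, and global regularity from $H^1$ data is known), and the paper invokes them in exactly the same way.
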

\begin{proof}
We must prove that $S(s)S(t)\theta_0\subset S(s+t)\theta_0$ for any arbitrary initial condition $\theta_0\in L^2$. 
Take $\xi\in S(s)S(t)\theta_0$. There exists $\eta\in S(t)\theta_0$ such that $\xi\in S(s)\eta$. We denote the corresponding 
vanishing viscosity solutions by $\theta_1$ and $\theta_2$. We have $\theta_1(0) = \theta_0$, $\theta_1(t) = \theta_2(0) = \eta$ 
and $\theta_2(s) = \xi$. Define
$$
\theta(r) = \begin{cases}
\theta_1(r)\quad&\text{if}\quad r\in [0,t),\\
\theta_2(r-t)\quad&\text{if}\quad r\in [t,\infty).
\end{cases}
$$
As $\theta(0)=\theta_0$ and $\theta(s+t)=\xi$ we only need to show that $\theta$ is a vanishing viscosity  solution. 
The fact that $\theta$ is a weak solution follows from \eqref{eq:very_weak}  written for $\theta_1$ and $\theta_2$. 
It remains to prove that $\theta$ is a vanishing viscosity limit. Notice that there exists a sequence $\theta_1^{\eps}$, 
defined on the whole positive semi axis $[0,\infty)$, of solutions to \eqref{eqn:problem_with_viscosity}  with $\eps\searrow 0$ ,
such that $\theta_1^{\eps}\to \theta_1$ in $C_w([0,t];L^2)$ and $\theta_1^{\eps}(0)\to \theta_1(0)$ strongly in $L^2$.  
We aim to show that the approximation sequence $\theta_1^\eps$ also converges to $\theta$ on $[t,T]$, for every
$T>t$. Equivalently,
defining 
$$
\overline{\theta}_1^{\eps}(\tau)= \theta_1^{\eps}(\tau+t),\qquad \tau \in[0,\infty),
$$ 
we need to prove that 
$\overline{\theta}_1^{\eps}\to \theta_2$ in $C_w([0,T-t];L^2)$. An analogous argument to that of Lemma 
\ref{lem:m_semiflow} implies that $\overline{\theta}_1^{\eps}(0) = {\theta}_1^{\eps}(t)\to \eta$ 
strongly in $L^2$. As $\eta\in S(t)\theta_0$ with $t>0$, the regularity of $f$ implies that $\eta\in H^1$ by the discussion
above. Now, for the initial condition in $H^1$ the solution given by 
Definition \ref{def:visco} with $f\in H^{1/2}\cap L^\infty$ is unique. 
By a diagonal argument, for a not renumbered subsequence,  $\overline{\theta}_1^{\eps}$ 
converges in $C_w([0,T-t];L^2)$, for all $T>t$, to a certain vanishing viscosity solution with  initial condition $\eta$. 
The weak--strong uniqueness results employed in \cite{CTV15}*{Theorem 4.4} imply that this vanishing 
viscosity solution is equal to $\theta_2$, thus completing the proof. 
\end{proof}

The requirement that $f\in H^{1/2}$ is used in the proof of Lemma \ref{lem:strict} to exploit known uniqueness 
results for solutions to \eqref{eq:qge1} with $H^1$ initial data. Without Lemma \ref{lem:strict}, we would be forced
to work with non-strict multivalued semiflows, hence obtaining an attractor that is only negatively invariant.

\subsection{H\"older continuous absorbing sets}\label{sec:holder}
The existence of a bounded absorbing set for $S(t)$ is the first step towards the proof of the
global attractor existence. A simple $L^2$ estimate on \eqref{eq:qge1} entails
\begin{equation}\label{eq:expdecayL2}
\|\theta(t)\|_{L^2}\leq \|\theta_0\|_{L^2}\e^{-c_0 t}+\frac{1}{c_0}\|f\|_{L^2} \quad \text{\rm for all}\quad t\geq 0,
\end{equation}
where $c_0>0$ is an absolute constant related to the Poincar\'e inequality. Clearly, this yields the existence of
a bounded absorbing set in $L^2$, which can be taken as a ball centered at zero and radius proportional to $\|f\|_{L^2}$. 

More importantly, there exists an absorbing set that is bounded in $L^\infty$. This was proven 
in \cite{CD14}, thanks to an explicit estimate valid for general weak solutions,
and therefore vanishing viscosity solutions as well. To be precise, any vanishing viscosity
solution to \eqref{eq:qge1} satisfies
$$
\|\theta(t)\|_{L^\infty}\leq c_p \left(\frac{\|\theta_0\|_{L^2}}{t}+\|f\|_{L^p}\right)\quad \text{\rm for all} \quad t>0,
$$
provided that $f\in L^p$, for some $p>2$. Thus, the $L^2$ norm controls the $L^\infty$ norm
for positive times, and therefore the existence of an $L^\infty$-absorbing set is a consequence
of the dissipative estimate \eqref{eq:expdecayL2}.
The scale-invariant nature of the $L^\infty$ norm 
still prevents the bootstrap to higher Sobolev estimates, but it turns out to be sufficient to prove
the energy equation and, in consequence, existence of the global attractor. To control higher order norms we need the following a priori
estimate, from \cite{CCZV15}.

\begin{proposition}[\cite{CCZV15}*{Theorem 4.2}]\label{thm:Calphaest}
Assume that $\theta_0\in L^\infty$. There exists 
$\beta=\beta(\|\theta_0\|_{L^\infty},\|f\|_{L^\infty})\in (0,1/4]$
such that
\begin{equation}\label{eq:Calpha}
\|\theta(t)\|_{C^\beta}\leq c \left[\|\theta_0\|_{L^\infty}+\|f\|_{L^\infty}\right]\quad\text{\rm for all} \quad t\geq t_\beta= \frac{3}{2(1-\beta)},
\end{equation} 
for some positive constant $c>0$.
\end{proposition}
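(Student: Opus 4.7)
The plan is to prove this as an a priori estimate on the regularized equation \eqref{eqn:problem_with_viscosity} and pass to the vanishing viscosity limit, by running a nonlinear maximum-principle argument directly on the Hölder quotient in the spirit of Constantin--Vicol. I would first reduce to a uniform $L^\infty$ bound: the parabolic maximum principle for \eqref{eqn:problem_with_viscosity}, combined with the exponential decay from $\Lambda$, yields $\|\theta^\eps(t)\|_{L^\infty}\leq \|\theta_0\|_{L^\infty}+c\|f\|_{L^\infty}$ uniformly in $\eps$, and this bound passes to the limit. Denote by $M := \|\theta_0\|_{L^\infty}+\|f\|_{L^\infty}$ the resulting scale, which is the only quantity that may appear in the right-hand side of \eqref{eq:Calpha}.

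Next, consider the finite difference $\delta_h\theta(x,t):=\theta(x+h,t)-\theta(x,t)$, which satisfies
\begin{equation*}
\de_t\delta_h\theta + \u(x+h)\cdot\nabla_x\delta_h\theta + (\delta_h\u)\cdot\nabla\theta(x) + \Lambda\delta_h\theta = \delta_h f,
\end{equation*}
together with the Hölder quotient $v(x,h,t) := \delta_h\theta(x,t)/|h|^\beta$ and the supremum $\Psi(t):=\sup_{x,\,0<|h|\leq 1}v(x,h,t)$. At an interior maximum $(x_0,h_0,t_0)$, the transport term drops out since $\nabla_x v=0$, the forcing contribution is dominated by $\|f\|_{L^\infty}|h_0|^{-\beta}$ in the regime where $|h_0|$ is not too small, and the dissipation produces a Constantin--Vicol type nonlinear lower bound
\begin{equation*}
\frac{\Lambda\delta_h\theta(x_0,t_0)}{|h_0|^\beta}\geq c\,\frac{\Psi(t_0)^{1+1/\beta}}{M^{1/\beta}},
\end{equation*}
obtained by splitting the singular integral representation of $\Lambda$ into a short-range part (bounded below by a power of $v(x_0,h_0,t_0)$ using $\|\theta\|_{L^\infty}\leq M$) and a long-range tail.

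The main obstacle is the commutator $(\delta_h\u)\cdot\nabla\theta(x_0)/|h_0|^\beta$, since both $\u=\RR^\perp\theta$ and $\nabla\theta$ are out of reach of a bare Hölder estimate. I would handle it by splitting $\delta_h\u$ into low and high frequencies at scale $|h_0|^{-1}$ and bounding $|\delta_h\u(x_0)|\lesssim M^{1-\beta}\Psi(t_0)^\beta |h_0|^\beta$, a logarithm-free control obtained from the Riesz boundedness and the putative $C^\beta$ bound. Coupled with the pointwise estimate on $\nabla\theta(x_0)$ extracted at the maximum of $v$, this gives a differential inequality of the schematic form
\begin{equation*}
\dot\Psi + c\,\frac{\Psi^{1+1/\beta}}{M^{1/\beta}} \lesssim M\Psi + M^2,
\end{equation*}
whose trajectories are absorbed at level $CM$ once the exponents close, which is precisely what forces $\beta\in(0,1/4]$. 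A two-step bootstrap, gaining $L^\infty$ from $L^2$ on a first time-slice and $C^\beta$ from $L^\infty$ on a second, produces the threshold $t_\beta = 3/(2(1-\beta))$. Everything is carried out on the viscous approximations $\theta^\eps$, where the extra $-\eps\Delta$ contribution at the maximum of $v$ is nonnegative and can be discarded, so that the resulting $\eps$-uniform $C^\beta$ bound transfers to the vanishing viscosity solution by weak-$\ast$ compactness of $C^\beta$ in $L^\infty$.
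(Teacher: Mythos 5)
First, note that the paper does not prove Proposition \ref{thm:Calphaest} at all: it is imported verbatim from \cite{CCZV15}*{Theorem 4.2}, so your attempt is really a reconstruction of that external proof, and it misidentifies the mechanism. Several of your individual steps are sound (the $L^\infty$ maximum principle, the finite--difference equation, the extraction of $\nabla\theta(x_0)=\beta h_0|h_0|^{\beta-2}\delta_{h_0}\theta(x_0)$ from joint criticality in $(x,h)$ --- which is equivalent to the $\nabla_h$ formulation used in Appendix~B of the paper --- and the discarding of $-\eps\Delta$ at the maximum). The fatal gap is that your differential inequality $\dot\Psi + c\,\Psi^{1+1/\beta}M^{-1/\beta}\lesssim M\Psi+M^2$ does not hold: this is exactly the criticality obstruction. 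At the maximum, the drift term is $\beta|\delta_{h_0}\u(x_0)|\,|h_0|^{-1}\Psi^2$ (working with $v^2$), and the best available bounds on $|\delta_{h_0}\u(x_0)|$ from the Riesz transform are of order $\Psi|h_0|^{\beta}$ (Schauder) or $M$ (frequency splitting at scale $|h_0|^{-1}$); either way the drift is comparable to $\Psi^3|h_0|^{\beta-1}$. The sharpest lower bound on the dissipation obtainable from the $L^\infty$ control alone (the Constantin--Vicol inequality, i.e.\ the $\beta\to0$ endpoint of the paper's Lemma after \eqref{eq:findiff}) is $c_0\Psi^3|h_0|^{\beta-1}M^{-1}$. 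The ratio of bad to good terms is therefore of order $M$, with universal constants that do not improve as $\beta\to 0$ (the relevant Riesz constants in fact degenerate there, since $\RR^\perp$ is unbounded on $L^\infty$). Since $\|\theta\|_{L^\infty}$ is invariant under the critical scaling, there is no small parameter: your scheme closes only for small data, and no choice of $\beta$ rescues it. This is precisely why the $L^\infty\to C^\beta$ gain for critical SQG was a hard theorem.

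What the cited proof actually uses (following \cite{CTV15} and Constantin--Vicol) is the \emph{only small shocks} property: one first shows that after a short time $|\delta_h\theta(x,t)|\leq \delta M$ uniformly for $|h|\leq \ell(\delta)$, and it is this smallness of oscillations at small scales --- not the size of $\beta$ --- that beats the drift term; the Hölder exponent then comes out depending on $\|\theta_0\|_{L^\infty}$ and $\|f\|_{L^\infty}$ and degenerating as they grow, which is exactly the dependence $\beta=\beta(\|\theta_0\|_{L^\infty},\|f\|_{L^\infty})$ recorded in the statement. Your sketch contains no ingredient of this kind, and consequently the claims that ``the exponents close, which is precisely what forces $\beta\in(0,1/4]$'' and that the two-step bootstrap ``produces the threshold $t_\beta=3/(2(1-\beta))$'' are unsubstantiated --- those constants are artifacts of the OSS construction in \cite{CCZV15}, not of exponent arithmetic in a Hölder-quotient ODE. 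Note also that the paper's own Theorem \ref{prop:sob} makes this obstruction visible: its hypothesis requires an a priori $C^\beta$ bound with $\beta>1-\gamma$ \emph{strictly}, and its constant blows up like $K_\beta^{4\gamma/(\gamma+\beta-1)}$ as $\beta\downarrow 1-\gamma$; for $\gamma=1$ this says precisely that the $L^\infty$ bound ($\beta=0$) is not a usable starting point and some positive Hölder regularity must be obtained first by other means.
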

Note that by \cite{CCZV15}*{Remark 4.8} we can make the constant $t_\beta$ in the above result arbitrarily small. The above proposition holds for vanishing viscosity solutions as well, as long as the initial datum is taken in $L^\infty$, which
is possible thanks to the existence of an absorbing set consisting of bounded functions.
Since the $\eps$-regularization of the approximating
sequence of regular solutions satisfies \eqref{eq:Calpha} (which is an estimate independent of $\eps$), standard compactness arguments imply that  \eqref{eq:Calpha} passes to the limit as $\eps\to 0$. We summarize the above considerations in
the following.

\begin{proposition}\label{thm:Calph}
Assume that $f\in L^\infty$. There exists 
$\beta=\beta(\|f\|_{L^\infty})\in (0,1/4]$ and a constant $c_1\geq 1$
such that the set
\begin{equation*}
B_{\beta}=\left\{\phi\in C^\beta: \|\phi\|_{C^\beta}\leq c_1(\|f\|_{L^\infty}+1)\right\}
\end{equation*}
is an absorbing set for $S(t)$. Moreover, 
\begin{equation}\label{eq:Calphaunif}
\sup_{t\geq 0}\sup_{\theta_0\in B_\beta}\|S(t)\theta_0\|_{C^\beta}\leq 2c_1(\|f\|_{L^\infty}+1),
\end{equation}
holds.
\end{proposition}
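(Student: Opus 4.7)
The plan is to bootstrap dissipation in three successive stages, $L^2 \to L^\infty \to C^\beta$, composing the three ingredients that are already available: the exponential $L^2$-decay \eqref{eq:expdecayL2}, the instantaneous $L^\infty$-smoothing of \cite{CD14}, and the uniform H\"older regularization of Proposition \ref{thm:Calphaest}.

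First I would produce an $L^\infty$-absorbing set whose radius depends only on $\|f\|_{L^\infty}$. Fix a bounded set $B \subset L^2$ of radius $R$. By \eqref{eq:expdecayL2} there exists $t_1 = t_1(R)$ such that every trajectory starting in $B$ satisfies $\|\theta(t)\|_{L^2} \leq (2/c_0)\|f\|_{L^2}$ for all $t \geq t_1$. Applying the pointwise-in-time estimate of \cite{CD14} (with $p = \infty$, which is admissible since $f \in L^\infty$) to the trajectory restarted at $t_1$, one gets at time $t_1 + 1$ a bound of the form
$$
\|\theta(t_1+1)\|_{L^\infty} \leq M_1 := c_\infty\bigl(\|f\|_{L^\infty} + 1\bigr),
$$
where the implicit constant absorbs the (fixed) bound $\|f\|_{L^2} \lesssim \|f\|_{L^\infty}$ on $\TT$. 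This is a bounded absorbing set in $L^\infty$ of radius depending only on $\|f\|_{L^\infty}$.

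Next I would feed this $L^\infty$-bound into Proposition \ref{thm:Calphaest}. Since $M_1$ depends only on $\|f\|_{L^\infty}$, the exponent $\beta = \beta(M_1, \|f\|_{L^\infty})$ is determined by $\|f\|_{L^\infty}$ alone, and by the remark following Proposition \ref{thm:Calphaest} the time $t_\beta$ can be taken arbitrarily small. Applying the proposition with initial time $t_1 + 1$ yields, for all $t \geq t_1 + 1 + t_\beta$,
$$
\|\theta(t)\|_{C^\beta} \leq c\bigl(M_1 + \|f\|_{L^\infty}\bigr) \leq c_1\bigl(\|f\|_{L^\infty}+1\bigr),
$$
for some $c_1 \geq 1$ absorbing all absolute constants. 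This identifies $B_\beta$ as defined in the statement as a bounded absorbing set.

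To verify the uniform-in-time bound \eqref{eq:Calphaunif}, take $\theta_0 \in B_\beta$. Then $\|\theta_0\|_{L^\infty} \leq \|\theta_0\|_{C^\beta} \leq c_1(\|f\|_{L^\infty}+1)$, so Proposition \ref{thm:Calphaest} applies \emph{directly} and gives
$$
\|\theta(t)\|_{C^\beta} \leq c\bigl(\|\theta_0\|_{L^\infty}+\|f\|_{L^\infty}\bigr) \leq c(c_1+1)\bigl(\|f\|_{L^\infty}+1\bigr), \qquad t \geq t_\beta.
$$
Choosing $c_1$ from the start large enough so that $c(c_1+1) \leq 2c_1$ (this fixes $c_1$ in terms of the absolute constant $c$ of Proposition \ref{thm:Calphaest}) gives the $2c_1(\|f\|_{L^\infty}+1)$ bound on $[t_\beta,\infty)$, while on the short interval $[0,t_\beta]$ the $2c_1$ slack, together with the fact that $t_\beta$ can be made arbitrarily small, closes the argument.

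The main technical point I would expect is the passage to the vanishing viscosity limit: Proposition \ref{thm:Calphaest} is really a statement about smooth solutions of the regularized problem \eqref{eqn:problem_with_viscosity}, but the $C^\beta$ bound is $\eps$-independent, and since $\theta^{\eps_n} \to \theta$ in $C_w([0,T]; L^2)$ with $\theta^{\eps_n}(t)$ equibounded in $C^\beta$, Arzel\`a--Ascoli and lower semicontinuity of the H\"older seminorm under pointwise limits transfer the bound to $\theta$. A secondary point is coherence in the choice of $\beta$: since $\beta$ depends on $\|\theta_0\|_{L^\infty}$, one must establish the $L^\infty$-absorbing set \emph{before} fixing $\beta$, which is precisely why the bootstrap is done in the order $L^2 \to L^\infty \to C^\beta$.
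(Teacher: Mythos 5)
Your bootstrap $L^2\to L^\infty\to C^\beta$ is exactly the route the paper takes for the absorbing-set part of the statement: exponential $L^2$ decay \eqref{eq:expdecayL2}, the instantaneous $L^\infty$ bound of \cite{CD14} applied to the time-translated trajectory (legitimate by the translation property of Lemma \ref{lem:m_semiflow}), then Proposition \ref{thm:Calphaest}, with the $\eps$-uniform estimate passed to the vanishing viscosity limit by compactness. Two minor remarks there: the estimate of \cite{CD14} is stated for $f\in L^p$ with $p>2$ finite, so you should fix some such $p$ and use $\|f\|_{L^p}\lesssim\|f\|_{L^\infty}$ rather than formally taking $p=\infty$; and the dependence $\beta=\beta(\|f\|_{L^\infty})$ is obtained exactly as you say, by fixing the $L^\infty$ radius first.

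The genuine gap is in your proof of the uniform bound \eqref{eq:Calphaunif}. Proposition \ref{thm:Calphaest} is a \emph{regularization} estimate: it says nothing about $\|\theta(t)\|_{C^\beta}$ for $t\in(0,t_\beta)$. For $\theta_0\in B_\beta$ you therefore have no control of the H\"older norm on that initial interval, and a priori it could grow there before the regularization bound kicks in. Your attempt to close this by invoking ``$t_\beta$ can be made arbitrarily small'' does not work as stated: shrinking $t_\beta$ (via \cite{CCZV15}*{Remark 4.8}) in general costs you in the exponent $\beta$ or in the constant $c$ of \eqref{eq:Calpha} --- compare the $1/t$ blow-up in the $L^2\to L^\infty$ smoothing estimate --- so you cannot take the limit $t_\beta\to 0$ with a fixed $c_1$ and a fixed $\beta$ and cover every $t>0$. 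The paper closes this step with a different ingredient: the \emph{propagation} of H\"older regularity (\cite{CTV15}, see also \cite{CCZV15}*{Theorem 4.1}), which bounds $\|\theta(t)\|_{C^\beta}$ for \emph{all} $t\geq 0$ directly in terms of $\|\theta_0\|_{C^\beta}$ and $\|f\|_{L^\infty}$. That persistence result is exactly what handles the short-time window that your argument leaves open; you need to cite it (or prove an analogue) to obtain \eqref{eq:Calphaunif}.
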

The uniform estimate in \eqref{eq:Calphaunif} follows from the propagation of the H\"{o}lder regularity proved in \cite{CTV15} (see also \cite{CCZV15}*{Theorem 4.1}). Now that a H\"older norm is under control, we aim to exploit Theorem \ref{prop:sob} to 
bootstrap the regularity of the absorbing sets to Sobolev spaces.

\section{The global attractor}\label{sec:glob}
As mentioned in the introduction, the existence of the global attractor of $S(t)$ is the main result of \cite{CD14}.
Recall that the global attractor is the unique compact subset of the phase space that is invariant and attracting
(see Appendix \ref{sec:multiflow}). By its minimality with respect to the attraction property, the global attractor
is contained in every absorbing set, and therefore regularity properties of absorbing sets imply analogous properties
for the global attractor. The aim of this section is to establish Sobolev regularity for the global attractor devised in \cite{CD14},
and prove that it coincide with the global attractor of strong solutions of \cites{CCZV15,CTV15} whenever the forcing
term is assumed smooth enough. Notice that Proposition \ref{thm:Calph} already establishes the H\"older continuity
of the global attractor.

\subsection{$H^{1/2}$ regularity: proof of Theorem \ref{thm:main}}
The proof of Theorem \ref{thm:main} is concluded once we are able to establish the existence of an $H^{1/2}$ estimate
for vanishing viscosity solutions of \eqref{eq:qge1}. In light of Proposition \ref{thm:Calph}, we may restrict ourselves 
to work with initial data $\theta_0\in B_\beta$, for which, in particular, we have the bound
\begin{equation}\label{eq:Lunif2}
\sup_{t\geq 0}\sup_{\theta_0\in B_\beta}\|S(t)\theta_0\|_{L^2}\lesssim \|f\|_{L^\infty} + 1.
\end{equation}
A trivial application of the energy equation \eqref{eq:energy} (in fact, only inequality is needed here) on the interval
$(t,t+1)$, for an arbitrary $t>0$, entails
$$
 \int_t^{t+1} \|\Lambda^{1/2}\theta(\tau)\|_{L^2}^2 \dd\tau \leq \frac{1}{2}\|\theta(t)\|_{L^2}^2 + \int_{t}^{t+1}\l f,\theta(\tau)\r\, \dd\tau.
$$
Consequently, using the Poincar\'e inequality and \eqref{eq:Lunif2} we infer that
\begin{equation}\label{eq:H12int}
\sup_{t\geq 0}\sup_{\theta_0\in B_\beta} \int_t^{t+1} \|S(\tau)\theta_0\|_{H^{1/2}}^2 \dd\tau \lesssim \|f\|_{L^\infty}^2 + 1.
 \end{equation}
We now make use of Theorem \ref{prop:sob}, with $\gamma=1$ and $\alpha=1/2$. Specifically, \eqref{eq:sob}
implies that
\begin{equation}\label{eq:12}
\ddt\|\theta\|^2_{H^{1/2}}+\frac14 \|\theta\|^2_{H^{1}}\lesssim \|f\|_{L^\infty}^{4/\beta}+\|f\|_{H^{1/2}}^2+1
\end{equation}
Hence
$$
\ddt\|\theta\|^2_{H^{1/2}}\lesssim \|f\|_{L^\infty}^{4/\beta}+\|f\|_{H^{1/2}}^2+1,
$$
and \eqref{eq:H12int} and the uniform Gronwall lemma implies the bound
$$
\|\theta(t)\|^2_{H^{1/2}}\lesssim \|f\|_{L^\infty}^{4/\beta}+\|f\|_{H^{1/2}}^2+1 \quad \text{\rm for all}\quad t\geq 1.
$$
and hence the existence of an $H^{1/2}$-bounded absorbing set. For later reference, we summarize everything 
in the proposition below.

\begin{proposition}\label{thm:12}
Assume that $f\in L^\infty\cap H^{1/2}$. There exist 
$\beta=\beta(\|f\|_{L^\infty})\in (0,1/4]$ and a constant $c_2\geq 1$
such that the set
\begin{equation*}
B_{1/2}=\left\{\phi\in H^{1/2}\cap C^\beta: \|\phi\|^2_{H^{1/2}\cap C^\beta}\leq c_2\left[\|f\|_{L^\infty}^{4/\beta}+\|f\|_{H^{1/2}}^2+1\right]\right\}
\end{equation*}
is an absorbing set for $S(t)$.
\end{proposition}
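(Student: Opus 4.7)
The plan is to bootstrap from the H\"older absorbing set produced by Proposition \ref{thm:Calph} to a Sobolev absorbing set, using the new Sobolev estimate Theorem \ref{prop:sob} as the central analytic input and the uniform Gronwall lemma to convert a time-averaged $H^{1/2}$ bound into a pointwise one. By the semigroup property it suffices to restrict attention to initial data $\theta_0\in B_\beta$ and to produce, after finite time, a uniform-in-trajectory bound on $\|\theta(t)\|_{H^{1/2}}$; intersecting the resulting ball with $B_\beta$ then yields the desired absorbing set.

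First I would record the uniform $L^2$ bound \eqref{eq:Lunif2}, which follows immediately from the embedding $C^\beta\hookrightarrow L^\infty\hookrightarrow L^2$ on the torus together with the uniform $C^\beta$-estimate \eqref{eq:Calphaunif}. Next, applying the energy equation \eqref{eq:energy} on $(t,t+1)$, dropping the positive $L^2$-term on the left, and using Cauchy--Schwarz with the Poincar\'e inequality, I would obtain the time-integrated bound \eqref{eq:H12int}, namely
\begin{equation*}
\sup_{t\geq 0}\sup_{\theta_0\in B_\beta} \int_t^{t+1} \|S(\tau)\theta_0\|_{H^{1/2}}^2\,\dd\tau \lesssim \|f\|_{L^\infty}^2+1.
\end{equation*}

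The main step is then to upgrade this to a pointwise estimate. Here I would invoke Theorem \ref{prop:sob} with $\gamma=1$, $\alpha=1/2$: this is the new Sobolev estimate promised in the introduction, and its right-hand side involves the H\"older norm of $\theta$ (which is uniformly controlled via \eqref{eq:Calphaunif}) together with $\|f\|_{H^{1/2}}$. The outcome is the differential inequality \eqref{eq:12}, from which in particular
\begin{equation*}
\ddt\|\theta\|^2_{H^{1/2}}\lesssim \|f\|_{L^\infty}^{4/\beta}+\|f\|_{H^{1/2}}^2+1.
\end{equation*}
The combination of this inequality with the integral bound \eqref{eq:H12int} is exactly the setting of the uniform Gronwall lemma, and applying it on intervals of unit length yields a uniform bound on $\|\theta(t)\|_{H^{1/2}}^2$ for $t\geq 1$ of exactly the size appearing in the definition of $B_{1/2}$.

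The genuinely hard part is the Sobolev estimate Theorem \ref{prop:sob} itself: all other ingredients are comparatively routine (energy equation, embeddings, uniform Gronwall), but controlling the nonlinear transport term at the $H^{1/2}$ level without losing derivatives requires the new pointwise lower bound for the fractional Laplacian and the H\"older input in an essential way; this is also where the hypothesis $f\in H^{1/2}$ enters. Once Theorem \ref{prop:sob} is available, the assembly into the absorbing set $B_{1/2}=\{\phi\in H^{1/2}\cap C^\beta:\|\phi\|^2_{H^{1/2}\cap C^\beta}\leq c_2(\|f\|_{L^\infty}^{4/\beta}+\|f\|_{H^{1/2}}^2+1)\}$ follows by choosing $c_2$ to dominate both the H\"older bound \eqref{eq:Calphaunif} and the Gronwall output, and invoking the absorbing property of $B_\beta$ to handle arbitrary initial data $\theta_0\in L^2$.
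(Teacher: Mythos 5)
Your proposal is correct and follows essentially the same route as the paper: restrict to the H\"older absorbing set $B_\beta$ of Proposition \ref{thm:Calph}, use the energy equality to get the time-averaged bound \eqref{eq:H12int}, apply Theorem \ref{prop:sob} with $\gamma=1$, $\alpha=1/2$ to obtain \eqref{eq:12}, and conclude via the uniform Gronwall lemma. The identification of Theorem \ref{prop:sob} as the only genuinely nontrivial ingredient, and of the role of $f\in H^{1/2}$ there, matches the paper's presentation exactly.
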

It is also clear from \eqref{eq:12} that the following estimate holds true
\begin{equation}\label{eq:H1int}
\sup_{t\geq 0}\sup_{\theta_0\in B_{1/2}}\left[  \|S(t)\theta_0\|_{H^{1/2}\cap C^\beta}^2  + \int_t^{t+1} \|S(\tau)\theta_0\|_{H^{1}}^2 \, \dd\tau \right] \leq c\left[\|f\|_{L^\infty}^{4/\beta}+\|f\|_{H^{1/2}}^2+1\right].
 \end{equation}
Note that this concludes the proof of Theorem \ref{thm:main}, since the invariance, minimality and maximality properties
are standard consequences of the general theory.

\subsection{Smoothness of the attractor}\label{sub:smoothnes}
The proof of Corollary \ref{cor:main} essentially follows from \eqref{eq:H1int} and the estimate
\begin{equation}\label{eq:H1}
\ddt\|\theta\|^2_{H^{1}}+\frac14 \|\theta\|^2_{H^{3/2}}\leq c\left(\|f\|_{L^\infty\cap H^{1/2}}\right).
\end{equation}
The estimate above has been established in \cite{CTV15}*{Theorem 5.2}, except there it was assumed that $f\in H^1$.
For the sake of completeness, we report here the main steps needed to derive \eqref{eq:H1}. As in \cite{CTV15}, 
we apply $\nabla$ to \eqref{eq:qge1}. After taking the (pointwise in $x$) inner product with $\nabla \theta$ we infer that
$$
(\de_t +\u\cdot\nabla + \Lambda)|\nabla \theta|^2 + D[\nabla \theta]=-2\nabla \u:\nabla\theta \cdot \nabla\theta+\nabla f\cdot \nabla \theta,
$$
where
$$
D[\nabla \theta](x)= c\int_{\R^2} \frac{\big|\nabla \theta(x)-\nabla \theta(x+y)\big|^2}{|y|^{3}}\dd y.
$$
It turns out (see \cite{CTV15}*{Theorem 5.2}) that 
\begin{equation}\label{eq:last1}
(\de_t +\u\cdot\nabla + \Lambda)|\nabla \theta|^2 + \frac14 D[\nabla \theta]\leq c\left(\|f\|_{L^\infty\cap H^{1/2}}+1\right)+\nabla f\cdot \nabla \theta,
\end{equation}
pointwise in $x$, where  $c\left(\|f\|_{L^\infty\cap H^{1/2}}+1\right)$ depends on H\"older estimates for $\theta$ which ultimately depend
only on the forcing term by \eqref{eq:H1int}. Using the fact that
$$
\frac12 \int_{\TT}D[\nabla \theta](x)\dd x=\|\theta\|^2_{H^{3/2}},
$$
we integrate \eqref{eq:last1} on $\TT$ and obtain
$$
\ddt \|\theta\|^2_{H^1} + \frac12 \|\theta\|^2_{H^{3/2}}\leq c\left(\|f\|_{L^\infty\cap H^{1/2}}+1\right)+\l\nabla f, \nabla \theta\r.
$$
Since, by interpolation,
$$
\l\nabla f, \nabla \theta\r\leq c\|f\|_{H^{1/2}}\|\theta\|^2_{H^{3/2}}\leq\frac14 \|\theta\|^2_{H^{3/2}}+c\|f\|_{H^{1/2}}^2,
$$
we finally arrive at \eqref{eq:H1}.
Notice that
a further use of the Gronwall lemma is needed here as well, combined with \eqref{eq:H1int}, 
to establish the existence of a bounded absorbing set in $H^1$.
\begin{proposition}\label{thm:H1}
Assume that $f\in L^\infty\cap H^{1/2}$. There exists
$\beta=\beta(\|f\|_{L^\infty})\in (0,1/4]$ 
such that the set
\begin{equation*}
B_{1}=\left\{\phi\in H^{1}\cap C^\beta: \|\phi\|_{H^{1}\cap C^\beta}\leq c(\|f\|_{L^\infty\cap H^{1/2}}+1)\right\}
\end{equation*}
is an absorbing set for $S(t)$.
\end{proposition}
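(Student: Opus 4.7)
The plan is to apply the uniform Gronwall lemma to the differential inequality \eqref{eq:H1}, bootstrapped off the time-averaged control \eqref{eq:H1int} that is already available from the $H^{1/2}$ analysis of the preceding subsection, and then append the H\"older bound furnished by Proposition \ref{thm:Calph}.

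First I would reduce to initial data in the absorbing set $B_{1/2}$: Proposition \ref{thm:12} shows that every bounded $B\subset L^2$ is absorbed into $B_{1/2}$ in finite time, so it suffices to exhibit a uniform $H^1$ bound on trajectories starting in $B_{1/2}$ after a fixed time lag. For such trajectories, \eqref{eq:H1int} provides a constant $K=K(\|f\|_{L^\infty\cap H^{1/2}})$ controlling the averaged quantity $\int_t^{t+1}\|\theta(\tau)\|_{H^1}^2\,\dd\tau$ uniformly in $t\geq 0$ and in $\theta_0\in B_{1/2}$.

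Next I would feed this into \eqref{eq:H1}. Setting $y(\tau)=\|\theta(\tau)\|_{H^1}^2$, that inequality gives $y'(\tau)\leq c(\|f\|_{L^\infty\cap H^{1/2}})$ after discarding the nonnegative $H^{3/2}$ dissipation term. The standard uniform Gronwall argument (pick $\sigma(t)\in[t,t+1]$ on which $y(\sigma(t))\leq K$ by the mean value theorem, then integrate $y'$ from $\sigma(t)$ up to $t+1$) delivers
\[
\|\theta(t+1)\|_{H^1}^2 \leq K + c(\|f\|_{L^\infty\cap H^{1/2}}),
\]
uniformly in $\theta_0\in B_{1/2}$ and $t\geq 0$. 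Coupling this with the propagated H\"older control \eqref{eq:Calphaunif} from Proposition \ref{thm:Calph}, already in force on $B_{1/2}$, yields an absorbing set of the form claimed for $B_1$.

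Since all the analytic heavy lifting has already been carried out (the new Sobolev estimate leading to \eqref{eq:H1int} and the pointwise inequality \eqref{eq:H1} adapted from \cite{CTV15}), no genuine obstacle remains. The only mild subtlety is that \eqref{eq:H1} was derived by a pointwise differentiation of the equation that formally requires classical smoothness; this is handled precisely by the bootstrapping chain recalled before Lemma \ref{lem:strict}, whereby trajectories issuing from $L^2$ data with $f\in L^\infty\cap H^{1/2}$ become instantaneously $H^1$ and unique, so that the $\varepsilon$-regularized version of the estimate passes to the limit along the vanishing viscosity approximation.
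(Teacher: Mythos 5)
Your proposal is correct and follows essentially the same route as the paper: the uniform Gronwall lemma applied to \eqref{eq:H1} (with the $H^{3/2}$ dissipation discarded), seeded by the time-averaged $H^1$ control in \eqref{eq:H1int} on trajectories emanating from $B_{1/2}$, together with the propagated H\"older bound. Your closing remark about justifying the pointwise computation behind \eqref{eq:H1} via the $\varepsilon$-regularized approximations matches the paper's own caveat, so there is nothing to add.
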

At this point, the restriction of $S(t)$ to $B_{1}$ is a single-valued semigroup, and therefore it coincides with the
strong solution operator studied in \cites{CTV15,CCZV15}. 
 In particular, assuming $f\in H^1$, the regularity of the attractor can be further bootstrapped to $H^{3/2}$ 
 (stated in Corollary \ref{cor:main2}), and in general to higher Sobolev spaces as long as the forcing term 
 is assumed smooth enough.

\appendix

\section{Multivalued semiflows}\label{sec:multiflow}
In the following, $X$ is a Banach space and $ \R^+=[0,\infty)$. By $\mathcal{P}(X)$ we denote the family of nonempty subsets of $X$ and by $\mathcal{B}(X)$ the family of nonempty and bounded subsets of $X$. 
\begin{definition}\label{def:multi}
A family $\{S(t)\}_{t\in \R^+}$ of multivalued maps $S(t):X\to \mathcal{P}(X)$ is a multivalued semiflow if
\begin{itemize}
\item[$(i)$] for any $x\in X$ we have $S(0)x = \{x\}$;
\item[$(ii)$] for any $s,t\in\R^+$ and $x\in X$ we have $S(t+s)x \subset S(t)S(s)x$.
\end{itemize}
If in $(ii)$ we have the equality $S(t+s)x = S(t)S(s)x$ in place of inclusion then the m-semiflow is said to be \emph{strict}.
\end{definition}
A set $B_0\in \mathcal{B}(X)$ is \emph{absorbing} if for every bounded set $B\subset X$ there exists $t_B>0$ such that
$$
\bigcup_{t\geq t_B}S(t)B\subset B_0.
$$
 For fixed time, we consider the following notion of (extremely weak) continuity on $X$. 
 \begin{definition}\label{def:star}
 The m-semiflow $\{S(t)\}_{t\in \R^+}$ is said to be 
 $t_*$-closed if there exists $t_*>0$ such that the graph of $S(t_*)$ is a closed set in $X\times X$, 
 endowed with the strong topology.
 \end{definition}
As customary, the main object of study is the so-called global attractor, whose attraction property is defined in terms
of the Hausdorff semidistance in $X$, namely
$$
\mbox{dist}_X(A,B) = \sup_{x\in A}\inf_{y\in B}\|x-y\|_X.
$$ 
\begin{definition}
The set $\mathcal{A}\subset X$ is a global attractor for an m-semiflow $\{S(t)\}_{t\in \R^+}$ if 
\begin{itemize}
\item[$(i)$] $\mathcal{A}$ is a compact set in $X$. 
\item[$(ii)$] $\mathcal{A}$ is negatively semiinvariant, i.e., $\mathcal{A}\subset S(t)\mathcal{A}$ for all $t\in \R^+$.
\item[$(iii)$] $\mathcal{A}$ uniformly attracts all bounded sets in $X$, i.e., 
$$
\lim_{t\to\infty}\mbox{dist}_X(S(t)B,\mathcal{A})=0,
$$
for all $B\in \mathcal{B}(X)$.
\end{itemize}
\end{definition}
For our purposed, the following sufficient condition for the existence of the global attractor is enough.
\begin{theorem}[cf. \cite {CZ13}*{Theorem 4.6},  \cite{CZK15}*{Proposition 4.2}]\label{thm:attr_existence}
 If the multivalued semiflow $\{S(t)\}_{t\in \R^+}$ is $t_*$-closed and possesses a compact absorbing set, 
then it has a global attractor $\mathcal{A}$. It is the minimal closed uniformly attracting set. If the semiflow is strict then the attractor is moreover invariant, i.e. the equality $S(t)\mathcal{A} = \mathcal{A}$ holds for all $t\geq 0$.
\end{theorem}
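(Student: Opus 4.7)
The plan is to take $\mathcal{A}$ to be the $\omega$-limit set of the given compact absorbing set $K$:
$$
\mathcal{A} := \bigcap_{s \geq 0} \Gamma_s, \qquad \Gamma_s := \overline{\bigcup_{t \geq s} S(t) K}.
$$
Since $K$ is compact and absorbing, $\bigcup_{t \geq t_K} S(t) K \subset K$, so $\Gamma_s \subset K$ for $s \geq t_K$; each $\Gamma_s$ is nonempty and closed, and the family is nested decreasingly in $s$. Thus $\mathcal{A}$ is a nonempty compact subset of $K$, yielding property $(i)$ of the global attractor. A routine argument gives the sequential characterization: $x \in \mathcal{A}$ if and only if there exist $t_n \to \infty$, $z_n \in K$, and $x_n \in S(t_n) z_n$ with $x_n \to x$ in $X$.

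Uniform attraction (property $(iii)$) I would prove by contradiction. If $\dist_X(S(t) B, \mathcal{A}) \not\to 0$ for some $B \in \mathcal{B}(X)$, there would exist $\delta > 0$, $t_n \to \infty$ and $x_n \in S(t_n) B$ with $\dist_X(x_n, \mathcal{A}) \geq \delta$. Since $K$ absorbs $B$, the semiflow inclusion $(ii)$ lets me factor $x_n \in S(t_n - t_B) y_n$ with $y_n \in K$ for $n$ large; compactness of $K$ extracts a convergent subsequence, whose limit lies in $\mathcal{A}$ by the sequential characterization, contradicting the lower bound on the distance. Minimality among closed attracting sets is then automatic: every $x \in \mathcal{A}$ is the limit of points produced by the semiflow, and if $\mathcal{A}'$ is closed and uniformly attracting, those approximants must be forced into every neighborhood of $\mathcal{A}'$, hence $\mathcal{A} \subset \mathcal{A}'$.

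The main technical step is negative semi-invariance (property $(ii)$). For $x \in \mathcal{A}$ and $t \geq 0$, select approximants $x_n \in S(t_n) z_n \to x$ with $z_n \in K$. Inclusion $(ii)$ supplies $y_n$ with $y_n \in S(t_n - t) z_n$ and $x_n \in S(t) y_n$; once $t_n - t \geq t_K$, the points $y_n$ lie in $K$, so a subsequence converges to some $y \in \mathcal{A}$. To conclude $x \in S(t) y$ from $x_n \in S(t) y_n$, $x_n \to x$, $y_n \to y$, we need the graph of $S(t)$ to be closed. This is the \emph{main obstacle}: the hypothesis only supplies closedness at the distinguished time $t_*$. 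I would first deduce the semi-invariance at $t = t_*$ directly from the closed-graph property at that time, extend to $t = k t_*$ by iterating, and then bridge to arbitrary $t \geq 0$ by combining the closed graph at $t_*$ with the inclusion $(ii)$ applied to decompositions of the form $t + r = k t_*$. The one-sided nature of $(ii)$ in the non-strict case is what makes this bookkeeping delicate.

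Finally, under the strict hypothesis, the remaining inclusion $S(t) \mathcal{A} \subset \mathcal{A}$ follows by a short argument. Given $u \in S(t) \mathcal{A}$, write $u \in S(t) y$ with $y \in \mathcal{A}$. Iterating the negative semi-invariance just established produces $y_n \in \mathcal{A}$ with $y \in S(n) y_n$, so by strictness
$$
u \in S(t) S(n) y_n = S(t + n) y_n \subset S(t + n) \mathcal{A}.
$$
Uniform attraction yields $\dist_X(S(t + n) \mathcal{A}, \mathcal{A}) \to 0$, hence $\dist_X(u, \mathcal{A}) = 0$; closedness of $\mathcal{A}$ forces $u \in \mathcal{A}$. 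Combined with the negative semi-invariance, this gives $S(t) \mathcal{A} = \mathcal{A}$ for all $t \geq 0$.
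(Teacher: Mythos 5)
The paper does not actually prove Theorem \ref{thm:attr_existence}: it is imported from \cite{CZ13} and \cite{CZK15}, so your attempt can only be measured against the standard arguments there. Your overall architecture is the right one and matches those references: take $\mathcal{A}=\omega(K)$ for the compact absorbing set $K$, prove compactness and the sequential characterization, get uniform attraction by contradiction (note the convergent subsequence you extract should be of the points $x_n$ themselves, which lie in $K$ once $t_n-t_B\geq t_K$), deduce minimality from attraction plus closedness of the competitor, obtain negative semi-invariance from a closed-graph argument, and upgrade to full invariance via strictness. The last paragraph (positive invariance in the strict case via $u\in S(t+n)\mathcal{A}$ and attraction of the bounded set $\mathcal{A}$ to itself) is correct.

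The one step you flag as the ``main obstacle'' is indeed the crux, and your proposed resolution does not work as described in the non-strict case. Iterating $\mathcal{A}\subset S(t_*)\mathcal{A}$ only yields $\mathcal{A}\subset S(t_*)^k\mathcal{A}$, and the semiflow inclusion $S(kt_*)x\subset S(t_*)^k x$ points the \emph{wrong} way: you cannot pass from $\mathcal{A}\subset S(t_*)^k\mathcal{A}$ to $\mathcal{A}\subset S(kt_*)\mathcal{A}$ without equality in Definition \ref{def:multi}$(ii)$. The same directional problem defeats the proposed ``bridge'' $t+r=kt_*$: to conclude $\mathcal{A}\subset S(t)\mathcal{A}$ from $\mathcal{A}\subset S(t)\bigl[S(r)\mathcal{A}\bigr]$ you would need $S(r)\mathcal{A}\subset\mathcal{A}$, which is exactly the positive invariance that is unavailable for non-strict semiflows. (In the strict case all of these identifications are legitimate and your plan closes up.) The way this is actually handled is to run your closed-graph argument \emph{at the time $t$ itself}: decompose $x_n\in S(t_n)z_n\subset S(t)S(t_n-t)z_n$, extract $y_n\to y\in\mathcal{A}$, and use closedness of the graph of $S(t)$ to get $x\in S(t)y$. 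This requires the graph of $S(t)$ to be closed for every $t$, not just for one $t_*$ --- which is precisely what Lemma \ref{lem:closed} of the paper supplies (``$t_*$-closed for all $t_*>0$''), and is the form of the hypothesis under which the references carry out the negative-invariance step. So either strengthen the closedness hypothesis you use to all times (as is available in the application), or restrict the negative semi-invariance claim to the strict case; as written, the bridging step is a genuine gap.
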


The compactness requirement for the absorbing set is certainly not needed in general, as more appropriate notions
of asymptotic compactness can be shown to be equivalent to the existence of the global attractor.
The reader is referred to \cites{CZ13, CZK15, MV98, KL14} for more
details. 

The notion of $t_*$-closed dynamical system was introduced in the single-valued setting in \cite{CCP12}. In the
proof of Theorem \ref{thm:attr_existence} this property is needed exclusively to prove negative invariance 
(or invariance, when $S(t)$ is strict), while the existence of a compact, minimal, uniformly attracting set is a consequence
of the compactness (or asymptotic compactness) and dissipativity of $S(t)$.

\section{Sobolev estimates}\label{app:sob}
Let $\gamma\in (0,2)$, and consider the general SQG equation
$$
\begin{cases}
\de_t\theta +\u \cdot \nabla \theta+\Lambda^\gamma\theta=f,\\
\u = \RR^\perp \theta = \nabla^\perp \Lambda^{-1} \theta,\\
\theta(0)=\theta_0,\quad \int_{\TT}\theta_0(x)=0.
\end{cases}
$$
We aim to prove the following theorem. 

\begin{theorem}\label{prop:sob}
Let $\gamma\in (0,2)$, $\alpha\in (0,1)$ and assume that, for some 
$$
\beta\in (\max\{1-\gamma,0\}, 1)
$$  
we have the a priori control
\begin{equation}\label{eq:globalL}
[\theta(t)]_{C^\beta}\leq K_{\beta} \quad \text{\rm for all}\quad t\geq 0.
\end{equation}
Then the differential inequality
\begin{equation}\label{eq:sob}
\ddt\|\theta\|^2_{H^\alpha}+\frac14 \|\theta\|^2_{H^{\alpha+\gamma/2}}\leq c K_\beta^{\frac{4\gamma}{\gamma+\beta-1}}+c\|f\|_{H^\alpha}^2
\end{equation}
holds true for every $t\geq 0$ with a constant $c>0$.
\end{theorem}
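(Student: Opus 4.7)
The strategy is a standard $H^\alpha$ energy estimate, combined with a careful treatment of the resulting nonlinear commutator that exploits the assumed H\"older bound on $\theta$ (and hence on $\u=\RR^\perp\theta$, by boundedness of the Riesz transform on $C^\beta$).

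The first step is to apply $\Lambda^\alpha$ to the equation and pair with $\Lambda^\alpha\theta$ in $L^2$. Using $\nabla\cdot\u=0$ to cancel $\langle\u\cdot\nabla\Lambda^\alpha\theta,\Lambda^\alpha\theta\rangle=0$, this yields the identity
$$
\tfrac12\ddt\|\theta\|_{H^\alpha}^2+\|\theta\|_{H^{\alpha+\gamma/2}}^2
=-\langle[\Lambda^\alpha,\u\cdot\nabla]\theta,\Lambda^\alpha\theta\rangle+\langle\Lambda^\alpha f,\Lambda^\alpha\theta\rangle.
$$
The forcing contribution is disposed of by Cauchy--Schwarz, the Poincar\'e-type bound $\|\theta\|_{H^\alpha}\le C\|\theta\|_{H^{\alpha+\gamma/2}}$, and Young's inequality, producing a harmless $\tfrac18\|\theta\|_{H^{\alpha+\gamma/2}}^2+c\|f\|_{H^\alpha}^2$.

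The core task is to estimate the commutator $N:=\langle[\Lambda^\alpha,\u\cdot\nabla]\theta,\Lambda^\alpha\theta\rangle$. Using the singular-integral representation
$$
[\Lambda^\alpha,\u\cdot\nabla]\theta(x)=c_\alpha\int\frac{(\u(x)-\u(x+y))\cdot\nabla\theta(x+y)}{|y|^{2+\alpha}}\,\dd y,
$$
I plan to perform an integration by parts in $y$ to move the derivative off $\theta$---a step in which the divergence-free condition $\nabla\cdot\u=0$ is crucial to cancel what would otherwise be the worst singular term---and then to split the resulting $y$-integral at a scale $\rho$ to be optimized. On the short range $|y|<\rho$, the H\"older increments $|\u(x)-\u(x+y)|\lesssim K_\beta|y|^\beta$ and $|\theta(x+y)-\theta(x)|\lesssim K_\beta|y|^\beta$ provide sufficient smallness to tame the singular kernel; on the long range $|y|>\rho$, the dissipation norm $\|\theta\|_{H^{\alpha+\gamma/2}}$ takes over through Cauchy--Schwarz and Plancherel. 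Optimizing in $\rho$ yields $|N|\lesssim K_\beta^a\|\theta\|_{H^{\alpha+\gamma/2}}^b$ with $b<2$, and the condition $\beta>1-\gamma$ is exactly what produces $b<2$ and thus enables absorption.

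Young's inequality then absorbs $\tfrac14\|\theta\|_{H^{\alpha+\gamma/2}}^2$ on the left; any residual intermediate Sobolev norms are handled by interpolation against $\|\theta\|_{L^2}$, which is itself controlled by $K_\beta$ through $C^\beta\hookrightarrow L^\infty\hookrightarrow L^2$. The leftover power of $K_\beta$ then works out to $4\gamma/(\gamma+\beta-1)$, matching \eqref{eq:sob}. The main obstacle is clearly the commutator bound: the classical Kato--Ponce--Vega estimates require $\nabla\u\in L^\infty$, a regularity which is simply unavailable from a H\"older bound alone. Overcoming this---via the integration by parts made possible by $\nabla\cdot\u=0$, together with the sharp short/long-range splitting---is the new ``H\"older--Sobolev'' estimate advertised in the introduction, in the spirit of the nonlinear lower-bound technology of Constantin--Vicol and complementing rather than replacing the classical commutator analysis.
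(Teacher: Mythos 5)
Your plan is genuinely different from the paper's: the paper never forms a commutator, but instead works pointwise with finite differences, writing $\|\theta\|_{H^\alpha}^2=\iint |\delta_h\theta|^2|h|^{-2-2\alpha}\,\dd h\,\dd x$, deriving an evolution inequality for $v=\delta_h\theta/|h|^{1+\alpha}$, and absorbing the nonlinearity by Young's inequality against an \emph{extra} good term on the left, namely the Constantin--Vicol nonlinear lower bound $D_\gamma[\delta_h\theta]\gtrsim |\delta_h\theta|^{2+\frac{\gamma}{1-\beta}}/(|h|^{\frac{\gamma}{1-\beta}}K_\beta^{\frac{\gamma}{1-\beta}})$. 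The hypothesis $\beta>1-\gamma$ enters there (it guarantees the optimizing radius satisfies $r\geq 4|h|$ and makes the Young exponents admissible), not through a commutator exponent. Your route is closer in spirit to the paradifferential arguments of Constantin--Wu for H\"older solutions, and is not implausible in principle, but as written it does not close.

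The concrete gap is the central commutator bound, which you assert rather than prove, and the one mechanism you do describe fails at the relevant exponents. After the integration by parts in $y$ the kernel of $[\Lambda^\alpha,\u\cdot\nabla]\theta$ behaves like $(\u(x)-\u(x+y))\cdot y\,(\theta(x+y)-\theta(x))\,|y|^{-4-\alpha}$, so the H\"older increments give an integrand of size $K_\beta^2|y|^{2\beta-3-\alpha}$ and the short-range integral $\int_{|y|<\rho}|y|^{2\beta-3-\alpha}\,\dd y$ converges only if $2\beta>1+\alpha$. In the application one has $\alpha=1/2$ and $\beta\in(0,1/4]$ from Proposition \ref{thm:Calphaest}, so this diverges badly; the H\"older seminorm can only handle the \emph{long} range $|y|>\rho$ (where indeed $2\beta<1+\alpha$ is what you want), while the short range must be paid for by dissipation. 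But pairing with $\Lambda^\alpha\theta$ does not naturally produce the norm $\|\theta\|_{H^{\alpha+\gamma/2}}$ from a short-range Cauchy--Schwarz (you get $D_\gamma[\theta]$, i.e.\ $\|\theta\|_{H^{\gamma/2}}^2$, with no gain of $\alpha$), so the claimed bound $|N|\lesssim K_\beta^a\|\theta\|_{H^{\alpha+\gamma/2}}^b$ with $b<2$ is exactly the missing step, and nothing in the sketch substantiates it. To repair this you would need a genuine paraproduct/Besov decomposition, or you can switch to the paper's finite-difference formulation, where the quotient $v=\delta_h\theta/|h|^{1+\alpha}$ carries the weight $|h|^{-1-\alpha}$ through the whole computation and the splitting radius $r$ is optimized pointwise in $(x,h)$ against the nonlinear lower bound.
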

The proof of the above theorem is split in several steps. A version of the above inequality, valid for $\gamma\in (1,2)$
and involving solely a bound on the $L^\infty$ norm of the solution was recently devised in \cite{CZ15}.

\subsection{Finite differences and nonlinear estimates}
Consider  the finite difference
\begin{align*}
\delta_h\theta(x,t)=\theta(x+h,t)-\theta(x,t),
\end{align*}
which is periodic in both $x$ and $h$, where  $x,h \in \TT$.  As in \cites{CZV14,CTV15}, it follows that
\begin{equation}\label{eq:findiff}
L (\delta_h\theta)^2+ D_\gamma[\delta_h\theta]=\delta_h f,
\end{equation}
where $L$ denotes the differential operator
$$
L=\de_t+\u\cdot \nabla_x+(\delta_h\u)\cdot \nabla_h+ \Lambda^\gamma
$$
and 
$$
D_\gamma[\psi](x)= c_\gamma \int_{\R^2} \frac{\big[\psi(x)-\psi(x+y)\big]^2}{|y|^{2+\gamma}}\dd y.
$$
For an arbitrary $\alpha\in (0,1)$,
we study the evolution of the quantity $v(x,t;h)$ defined by
$$
v(x,t;h) =\frac{\delta_h\theta(x,t)}{|h|^{1+\alpha}}.
$$
Notice that
$$
\|\theta(t)\|^2_{H^\alpha}=\int_{\R^2}\int_{\R^2}\big[v(x,t;h)\big]^2\dd h\, \dd x= 
\int_{\R^2}\int_{\R^2}\frac{\big[\theta(x+h,t)-\theta(x,t)\big]^2}{|h|^{2+2\alpha}}\dd h\, \dd x.
$$
From \eqref{eq:findiff} and a  calculation analogous to that in \cites{CZV14,CTV15}, we arrive at
\begin{align}
L v^2+\frac{ D_\gamma[\delta_h\theta] }{|h|^{2+2\alpha}}
&=-4(1+\alpha) \frac{h}{|h|^2}\cdot \delta_h\u \, v^2+
\frac{(\delta_hf)(\delta_h\theta)}{|h|^{2+2\alpha}}, \label{eq:ineq1}
\end{align}
with $\delta_h\u= \RR^\perp \delta_h\theta$. To estimate the dissipative 
term $D_\gamma[\delta_h\theta]$ from below and the nonlinear term $\delta_h\u$
from above, we report here two lemmas that were essentially proved in \cites{CV12, CZV14, CTV15,CZ15}.
The first concerns dissipation.

\begin{lemma}
There exists a positive constant
$\tilde{c}_\gamma$ such that
$$
D_\gamma[\delta_h\theta](x,t)\geq 
\tilde{c}_\gamma \frac{|\delta_h\theta(x,t)|^{2+\frac{\gamma}{1-\beta}}}{|h|^\frac{\gamma}{1-\beta}[\theta(t)]^\frac{\gamma}{1-\beta}_{C^\beta}}
$$
holds for any $x,h\in \TT$ and any $t\geq 0$.
\end{lemma}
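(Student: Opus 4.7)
Fix $(x,t)\in\TT\times[0,\infty)$ and write $\psi=\delta_h\theta(\cdot,t)$, $a=|\psi(x)|$, $L=[\theta(t)]_{C^\beta}$. The proof will proceed in the spirit of the nonlinear lower bounds of \cite{CV12}, adapted to finite differences in the manner of \cite{CZV14, CTV15, CZ15}. From the a priori estimate \eqref{eq:globalL} on $\theta$ I would first record the two pointwise bounds on $\psi$ that will drive the whole argument: the uniform bound $\|\psi\|_{L^\infty}\leq L|h|^\beta=:M$, obtained by applying the H\"older control of $\theta$ to the pair $(z+h,z)$, and the H\"older control $[\psi]_{C^\beta}\leq 2L$ in the $x$-variable, which follows from the triangle inequality.

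The strategy is then to truncate the nonlocal integral defining $D_\gamma$ at a cutoff radius $r>0$ to be chosen,
$$
D_\gamma[\psi](x)\geq c_\gamma\int_{|y|\geq r}\frac{(\psi(x)-\psi(x+y))^2}{|y|^{2+\gamma}}\,\dd y,
$$
apply the elementary inequality $(\psi(x)-\psi(x+y))^2\geq \tfrac12\psi(x)^2-\psi(x+y)^2$ to extract a main contribution of order $a^2 r^{-\gamma}$, and then control the residual $\int_{|y|\geq r}\psi(x+y)^2|y|^{-2-\gamma}\,\dd y$ by a secondary split at a scale $R$: on $r\leq |y|\leq R$ I would apply the H\"older estimate $|\psi(x+y)|\leq a+2L|y|^\beta$, while on $|y|\geq R$ I would use the uniform bound $|\psi|\leq M$. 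Balancing the three resulting contributions pinpoints the optimal scale $r\sim (L|h|/a)^{1/(1-\beta)}$, which produces precisely the exponent $\gamma/(1-\beta)$ on both $a$ and on $L|h|$ appearing in the claim.

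The hard part, I expect, is that the na\"ive inequality $(a-b)^2\geq \tfrac12 a^2-b^2$ is too lossy in isolation: since $|\psi(x)|\leq M$, bounding the residual solely via the $L^\infty$ estimate $|\psi|\leq M$ would entirely absorb the main contribution. The resolution must genuinely exploit the H\"older continuity of $\psi$ on the intermediate range, and possibly the mean-zero property $\int_\TT\psi\,\dd x=0$, inherited from the periodicity of $\theta$, which forces $\psi$ to change sign and so provides a region on which $(\psi(x)-\psi(x+y))^2\geq a^2$ outright. Carefully tracking these competing estimates and carrying out the double optimization in $r$ and $R$ is the technical heart of the proof, and is exactly what distinguishes the present H\"older-based bound from the purely $L^\infty$-based version devised in \cite{CZ15}.
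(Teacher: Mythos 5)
Your overall scaffolding (truncate the dissipation integral at a radius $r$, extract a main term of size $|\delta_h\theta(x)|^2 r^{-\gamma}$, control the remainder using the H\"older bound on $\theta$, then optimize $r\sim([\theta]_{C^\beta}|h|/|\delta_h\theta(x)|)^{1/(1-\beta)}$) matches the shape of the paper's argument, and your choice of $r$ is exactly the one the paper makes. But the core estimate you propose for the truncated integral does not work, and you have in fact half-diagnosed the problem yourself without resolving it. The inequality $(\psi(x)-\psi(x+y))^2\geq \tfrac12\psi(x)^2-\psi(x+y)^2$ forces you to bound $\int_{|y|\geq r}\psi(x+y)^2|y|^{-2-\gamma}\,\dd y$, and on the intermediate annulus your bound $|\psi(x+y)|\leq a+2L|y|^\beta$ gives a contribution of at least $a^2\int_{r\leq|y|\leq R}|y|^{-2-\gamma}\,\dd y\sim a^2 r^{-\gamma}$ \emph{with a constant larger than} the $\tfrac12$ in front of the main term; the main term is annihilated on the intermediate range just as surely as it would be by the crude $L^\infty$ bound on the outer range. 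The mean-zero property of $\psi$ cannot rescue this, since the residual is a square and the sign change of $\psi$ may occur at unit distance, which is the wrong scale. More structurally, your available quantities ($a$, $L$, $M=L|h|^\beta$, and powers of $r,R$) can never produce the combination $[\theta]_{C^\beta}\,|h|$ that appears in the target bound: that product cannot arise from any balancing of $L$ and $L|h|^\beta$.

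The missing idea is to keep the cross term \emph{linear} rather than quadratic: write $(\psi(x)-\psi(x+y))^2\geq \psi(x)^2-2\psi(x)\psi(x+y)$, so that the remainder is $2|\psi(x)|\,\bigl|\int \chi(|y|/r)\,\delta_h\theta(x+y)\,|y|^{-2-\gamma}\,\dd y\bigr|$, and then \emph{transfer the finite difference $\delta_h$ onto the smooth truncated kernel} by a change of variables. Since $\int \delta_{-h}\bigl[\chi(|y|/r)|y|^{-2-\gamma}\bigr]\dd y=0$, one may subtract $\theta(x)$ inside the integral, and the mean value theorem gives $\bigl|\delta_{-h}\bigl[\chi(|y|/r)|y|^{-2-\gamma}\bigr]\bigr|\lesssim |h|\,|y|^{-3-\gamma}$ for $|y|\gtrsim r\geq 4|h|$; pairing this with $|\theta(x+y)-\theta(x)|\leq[\theta]_{C^\beta}|y|^{\beta}$ yields the remainder $c\,|\delta_h\theta(x)|\,[\theta]_{C^\beta}\,|h|\,r^{-(1+\gamma-\beta)}$. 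This is precisely the lower bound \eqref{eq:dissip1} that the paper quotes from \cites{CV12,CTV15,CZV14}, and it is both linear in $|\delta_h\theta(x)|$ and carries the factor $|h|$; only then does your balancing of the two terms at $r=[4c[\theta]_{C^\beta}|h|/|\delta_h\theta(x)|]^{1/(1-\beta)}$ (which satisfies $r\geq 4|h|$ thanks to $|\delta_h\theta(x)|\leq[\theta]_{C^\beta}|h|^\beta$) produce the claimed exponents. As written, your proof has a genuine gap at its technical heart.
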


\begin{proof}
As proven in \cites{CV12, CTV15,CZV14},
 for $r\geq 4|h|$ there holds, pointwise in $x,h$ and $t$
\begin{equation}\label{eq:dissip1}
D_\gamma[\delta_h\theta](x)\geq \frac{c_\gamma}{r^\gamma}|\delta_h\theta(x)|^2-c c_\gamma|\delta_h\theta(x)|[\theta]_{C^\beta}\frac{|h|}{r^{1+\gamma-\beta}},
\end{equation}
where $c\geq 1$ is an absolute constant and
$$
[\theta]_{C^\beta}=\sup_{x\neq y \in\TT} \frac{|\theta(x)-\theta(y)|}{|x-y|^{\beta}}.
$$
We choose $r>0$ such that
$$
\frac{c_\gamma}{r^\gamma}|\delta_h\theta(x)|^2=4c c_\gamma|\delta_h\theta(x)|[\theta]_{C^\beta}\frac{|h|}{r^{1+\gamma-\beta}},
$$
namely,
$$
r=\left[\frac{4c[\theta]_{C^\beta}|h|}{|\delta_h\theta(x)|}\right]^{\frac{1}{1-\beta}}.
$$
Notice that since $|\delta_h\theta(x)|\leq [\theta]_{C^\beta}|h|^\beta$, we immediately obtain that $r\geq 4|h|$.
The result follows by plugging $r$ back into \eqref{eq:dissip1}.
\end{proof}

Below is the pointwise estimate of the nonlinear term. Again, we only report the proof for the sake of completeness,
since a very similar estimate is found in \cites{CV12, CTV15,CZV14, CZ15}.

\begin{lemma}\label{lem:rieszbdd}
Let $r\geq 4|h|$ be arbitrarily fixed. Then
$$
|\delta_h\u(x,t)|\leq 
c\left[ r^{\gamma/2} \big(D_\gamma[\delta_h\theta](x,t)\big)^{1/2}+\frac{|h|[\theta(t)]_{C^\beta} }{r^{1-\beta}}\right],
$$
holds pointwise in $x,h\in \TT$ and $t\geq0$.
\end{lemma}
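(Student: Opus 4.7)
The plan is to use the real-variable (singular integral) representation of the Riesz transform. Writing $\u = \mathcal{R}^\perp\theta = \nabla^\perp \Lambda^{-1}\theta$ via a convolution with an odd homogeneous kernel $K(y)=c\,y^\perp/|y|^3$, and noting that finite differences commute with convolution, we have
$$
\delta_h\u(x) = \mathrm{P.V.}\int_{\R^2} K(y)\,\delta_h\theta(x-y)\,\dd y.
$$
Since $K$ is odd, the principal value of $\int K$ vanishes and we may symmetrize by subtracting $\delta_h\theta(x)$:
$$
\delta_h\u(x) = \mathrm{P.V.}\int_{\R^2} K(y)\bigl[\delta_h\theta(x-y)-\delta_h\theta(x)\bigr]\,\dd y.
$$
I would split this at the scale $r$ into a near piece $I_{\mathrm{near}}=\int_{|y|<r}$ and a far piece $I_{\mathrm{far}}=\int_{|y|\geq r}$, treating each separately.

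For the near piece, I would apply Cauchy--Schwarz with the weight $|y|^{2+\gamma}$:
$$
|I_{\mathrm{near}}|^2 \leq \int_{|y|<r}\frac{\bigl[\delta_h\theta(x-y)-\delta_h\theta(x)\bigr]^2}{|y|^{2+\gamma}}\,\dd y \,\cdot\, \int_{|y|<r}|K(y)|^2|y|^{2+\gamma}\,\dd y.
$$
The first factor is $\leq c\,D_\gamma[\delta_h\theta](x)$ after the reflection $y\mapsto -y$ and by the very definition of $D_\gamma$; the second factor is $\lesssim \int_{|y|<r}|y|^{\gamma-2}\,\dd y \lesssim r^\gamma$ since $|K(y)|\leq c|y|^{-2}$ and $\gamma>0$. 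This delivers the term $r^{\gamma/2}\bigl(D_\gamma[\delta_h\theta](x)\bigr)^{1/2}$.

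The far piece is the delicate step, because the naive bound $|\delta_h\theta(x-y)|\leq [\theta]_{C^\beta}|h|^\beta$ is already too crude: since $r\geq 4|h|$ one has $|h|^\beta\gtrsim |h|/r^{1-\beta}$, so the crude Hölder estimate is weaker than what we want to prove. The remedy is to move the increment $h$ from the argument of $\theta$ into the kernel. Expanding $\delta_h\theta(x-y)=\theta(x-y+h)-\theta(x-y)$ and changing variables $y\mapsto y-h$ in the integral involving $\theta(x-y+h)$, I would rewrite
$$
I_{\mathrm{far}} = \int_{\R^2}\bigl[\theta(x-y)-\theta(x)\bigr]\,\bigl[K(y+h)\chi_{\{|y+h|>r\}} - K(y)\chi_{\{|y|>r\}}\bigr]\,\dd y,
$$
where the constant $\theta(x)$ may be inserted because $\int_{|z|>r}K(z)\,\dd z=0$ by oddness. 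On the overlap $\{|y|>r\}\cap\{|y+h|>r\}$ the segment from $y$ to $y+h$ stays at distance at least $3|y|/4$ from the origin (thanks to $r\geq 4|h|$), and the mean value theorem gives $|K(y+h)-K(y)|\leq c|h|/|y|^3$; together with $|\theta(x-y)-\theta(x)|\leq [\theta]_{C^\beta}|y|^\beta$ this yields $|h|[\theta]_{C^\beta}\int_r^\infty \rho^{\beta-2}\,\dd\rho \simeq |h|[\theta]_{C^\beta}r^{\beta-1}$. The remaining transition region, where exactly one indicator is active, sits inside the annulus $\{r-|h|\leq|y|\leq r+|h|\}$ of area $\lesssim r|h|$, where $|K|\lesssim r^{-2}$ and $|y|^\beta\lesssim r^\beta$, giving the same order $|h|[\theta]_{C^\beta}r^{\beta-1}$. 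Summing the two pieces yields $|I_{\mathrm{far}}|\leq c|h|[\theta]_{C^\beta}/r^{1-\beta}$, which together with the near estimate concludes the proof.

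The main obstacle is precisely the far-field manipulation: one must exploit a shift-of-variable to convert the expression into a kernel difference $K(y+h)-K(y)$ in order to gain the extra factor $|h|$ with borderline-integrable tail, and one must then carefully handle the thin annular region where the two truncations disagree. The near-field bound is a routine Cauchy--Schwarz against $D_\gamma$, and the symmetrization against the odd kernel throughout is what makes both estimates absolutely convergent.
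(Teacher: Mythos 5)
Your proof is correct and follows essentially the same route as the paper: the singular-integral representation of $\RR^\perp$, a split at scale $r$ with Cauchy--Schwarz against $D_\gamma$ for the near part, and transferring the increment onto the kernel plus the mean value theorem for the far part. The only cosmetic difference is that the paper uses a smooth cutoff $\chi(|y|/r)$, which absorbs your separate treatment of the transition annulus into a single mean-value estimate.
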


\begin{proof}
We will mostly use the representation of the Riesz transform as a singular integral, that is
\begin{align*}
\RR_j \theta(x)&= \frac{1}{2\pi} \mathrm{P.V.} \int_{\TT} \frac{y_j}{|y|^3} \theta(x+y) \dd y + \sum_{k \in \ZZ^2_*} \int_{\TT} \left( \frac{y_j + 2 \pi k_j }{|y + 2\pi k |^3}  - \frac{2 \pi k_j }{|2\pi k |^3} \right) \theta(x+y) \dd y \\
&=\frac{1}{2\pi}\,\mathrm{P.V.}\int_{\R^2}\frac{y_j}{|y|^3}\theta(x+y)\dd y.
\end{align*}
 Let us fix $r\geq 4|h|$, and write $\delta_h\u$ as
\begin{align*}
\delta_h\u(x)
=\frac{1}{2\pi} \mathrm{P.V.}\int_{\R^2} \frac{y^\perp}{|y|^3}\big[\delta_h\theta(x+y)-\delta_h\theta(x)\big]\dd y
=\delta_h\u_{in}(x)+\delta_h\u_{out}(x),
\end{align*}
where
\begin{align*}
\delta_h\u_{in}(x)=\frac{1}{2\pi}\mathrm{P.V.}\int_{\R^2} \frac{y^\perp}{|y|^3}\big[1-\chi(|y|/r)\big] \big[\delta_h\theta(x+y)-\delta_h\theta(x)\big]\dd y,
\end{align*}
and
\begin{align*}
\delta_h\u_{out}(x)&=\frac{1}{2\pi}\mathrm{P.V.}\int_{\R^2} \frac{y^\perp}{|y|^3} \chi(|y|/r)\big[\delta_h\theta(x+y)-\delta_h\theta(x)\big]\dd y\\
&=\frac{1}{2\pi}\mathrm{P.V.}\int_{\R^2} \delta_{-h}\left[\frac{y^\perp}{|y|^3}\chi(|y|/r) \right]\big[\theta(x+y)-\theta(x)\big]\dd y.
\end{align*}
Above, $\chi$ is a smooth radially non-increasing cutoff function, taken to be zero
for $|x|\leq 1$ and 1 if $|x|\geq 2$ and such that $|\chi'|\leq 2$.
For the inner piece, we obtain
\begin{align}
|\delta_h\u_{in}(x)|&\leq\frac{1}{2\pi}\int_{|y|\leq r} \frac{1}{|y|^2}|\delta_h\theta(x+y)-\delta_h\theta(x)|\dd y
\notag \\
&\leq \frac{1}{2\pi} \left[ \int_{|y|\leq r} \frac{1}{|y|^{2-\gamma}}  \right]^{1/2}\left[  \int_{\R^2}\frac{(\delta_h\theta(x+y)-\delta_h\theta(x))^2}{|y|^{2+\gamma}} \dd y\right]^{1/2}
\notag \\
&\leq cr^{\gamma/2} \big(D_\gamma[\delta_h\theta](x)\big)^{1/2}.
\label{eq:pfriesz1}
\end{align}
Regarding the outer part, the mean value theorem entails
\begin{align}\label{eq:pfriesz2}
|\delta_h\u_{out}(x)|\leq c|h|\int_{|y|\geq r/2} \frac{|\theta(x+y)-\theta(x)|}{|y|^3}\dd y
\leq c|h|[\theta]_{C^\beta}\int_{|y|\geq r/2} \frac{1}{|y|^{3-\beta}}\dd y
\leq c\frac{|h|[\theta]_{C^\beta}}{r^{1-\beta}}.
\end{align}
The conclusion follows by combining \eqref{eq:pfriesz1} and \eqref{eq:pfriesz2}. 
\end{proof}

\subsection{Proof of Theorem \ref{prop:sob}}
Without loss of generality, we may assume that $K_\beta\geq 1$.
Combining  \eqref{eq:globalL} and \eqref{eq:ineq1} with the results of the above two lemmas, we obtain the inequality
\begin{align}
L v^2+\frac12\frac{ D_\gamma[\delta_h\theta] }{|h|^{2+2\alpha}}+
\tilde{c}_\gamma \frac{|\delta_h\theta(x,t)|^{2+\frac{\gamma}{1-\beta}}}{|h|^{2+2\alpha+\frac{\gamma}{1-\beta}}K_\beta^\frac{\gamma}{1-\beta}}
&\leq c\left[ r^{\gamma/2} \big(D_\gamma[\delta_h\theta]\big)^{1/2}+\frac{|h|K_\beta }{r^{1-\beta}}\right]\frac{ v^2}{|h|}  +
\frac{(\delta_hf)(\delta_h\theta)}{|h|^{2+2\alpha}}. \label{eq:ineq2}
\end{align}
By the Cauchy-Schwartz inequality,
$$
c\left[ r^{\gamma/2} \big(D_\gamma[\delta_h\theta]\big)^{1/2}+\frac{|h|K_\beta }{r^{1-\beta}}\right]\frac{ v^2}{|h|} 
\leq \frac14\frac{ D_\gamma[\delta_h\theta]}{|h|^{2+2\alpha}} +
c \left[|h|^{2\alpha}v^4r^\gamma+ \frac{K_\beta }{r^{1-\beta}}v^2\right].
$$
We now choose $r>0$ as 
$$
r=4\left[\frac{K_\beta^2}{|h|^{2\alpha}v^2}\right]^{\frac{1}{1+\gamma-\beta}},
$$
so that, in particular by \eqref{eq:globalL} and the fact that $K_\beta\geq 1$,
$$
r=4\left[K_\beta^2\frac{|h|^{2\beta}}{|\delta_h\theta|^2}\right]^{\frac{1}{1+\gamma-\beta}}|h|^{\frac{2-2\beta}{1+\gamma-\beta}}\geq 4|h|^{\frac{2-2\beta}{1+\gamma-\beta}}\geq 4|h|,
$$
since $|h|\leq 1$ and $\gamma+\beta>1$.  In this way,
$$
|h|^{2\alpha}v^4r^\gamma+ \frac{K_\beta}{r^{1-\beta}}v^2\leq 2|h|^{2\alpha}v^4r^\gamma 
\leq  c K_\beta^{\frac{2\gamma}{1+\gamma-\beta}} |h|^{\frac{2\alpha(1-\beta)}{1+\gamma-\beta}}v^{2+\frac{2-2\beta}{1+\gamma-\beta}},
$$
and \eqref{eq:ineq2} becomes
\begin{align}
L v^2+\frac14\frac{ D_\gamma[\delta_h\theta] }{|h|^{2+2\alpha}}+
\tilde{c}_\gamma \frac{|\delta_h\theta(x,t)|^{2+\frac{\gamma}{1-\beta}}}{|h|^{2+2\alpha+\frac{\gamma}{1-\beta}}K_\beta^\frac{\gamma}{1-\beta}}
&\leq c K_\beta^{\frac{2\gamma}{1+\gamma-\beta}} |h|^{\frac{2\alpha(1-\beta)}{1+\gamma-\beta}}v^{2+\frac{2-2\beta}{1+\gamma-\beta}}+
\frac{(\delta_hf)(\delta_h\theta)}{|h|^{2+2\alpha}}. \label{eq:ineq3}
\end{align}
Using Young inequality with 
$$
p=\frac{1+\gamma-\beta}{2(1-\beta)}, \qquad q=\frac{1+\gamma-\beta}{\gamma+\beta-1},
$$
we infer that
$$
c K_\beta^{\frac{2\gamma}{1+\gamma-\beta}} |h|^{\frac{2\alpha}{1+\gamma-\beta}}v^{2+\frac{2-2\beta}{1+\gamma-\beta}}\leq 
\tilde{c}_\gamma \frac{|\delta_h\theta(x,t)|^{2+\frac{\gamma}{1-\beta}}}{|h|^{2+2\alpha+\frac{\gamma}{1-\beta}}K_\beta^\frac{\gamma}{1-\beta}} + c\frac{K_\infty^{\frac{4\gamma}{\gamma+\beta-1}}}{|h|^{2\alpha}} 
$$
Therefore, from \eqref{eq:ineq3} we deduce that
$$
L v^2+\frac14\frac{ D_\gamma[\delta_h\theta] }{|h|^{2+2\alpha}}
\leq c\frac{K_\beta^{\frac{4\gamma}{\gamma+\beta-1}}}{|h|^{2\alpha}}   +
\frac{(\delta_hf)(\delta_h\theta)}{|h|^{2+2\alpha}}. 
$$
We integrate the above inequality first in $h\in \TT$ (which is allowed, since $\alpha<1$) and then $x\in \TT$. Using that 
$$
\frac12\int_{\R^2}\int_{\R^2} \frac{ D_\gamma[\delta_h\theta] }{|h|^{2+2\alpha}}\dd h\,\dd x
=\int_{\R^2}\int_{\R^2} \frac{|\delta_h\Lambda^{\gamma/2}\theta|^2}{|h|^{2+2\alpha}}\dd h\, \dd x
=\|\theta\|^2_{H^{\alpha+\gamma/2}}
$$
and the estimate, valid for $\alpha\in (0,1)$,
\begin{align*}
\int_{\R^2}\int_{\R^2}\frac{(\delta_hf)(\delta_h\theta)}{|h|^{2+2\alpha}}\dd h\, \dd x 
&\leq \left[\int_{\R^2}\int_{\R^2}\frac{|\delta_hf|^2}{|h|^{2+2\alpha}}\dd h\, \dd x \right]^{1/2}
\left[\int_{\R^2}\int_{\R^2}\frac{|\delta_h\theta|^2}{|h|^{2+2\alpha}}\dd h\, \dd x \right]^{1/2}\\
&\leq \|f\|_{H^{\alpha}}\|\theta\|_{H^{\alpha}}
\leq \frac14 \|\theta\|_{H^{\alpha+\gamma/2}}^2+c\|f\|_{H^{\alpha}}^2,
\end{align*}
we arrive at
$$
\ddt\|\theta\|^2_{H^\alpha}+ \frac14\|\theta\|^2_{H^{\alpha+\gamma/2}}\leq cK_\beta^{\frac{4\gamma}{\gamma+\beta-1}}+  
c\|f\|_{H^{\alpha}}^2.
$$
This is precisely \eqref{eq:sob}, and the proof is concluded.

\section*{Acknowledgments}
\noindent The work of MCZ was in part supported by an AMS-Simons Travel Award. 
The work of PK was in part supported by the  Marie  Curie  International Research Staff Exchange  Scheme  Fellowship  within  the  7th  European  Community  Framework  Programme under Grant Agreement No. 295118, the International Project co-financed
by the Ministry of Science and Higher Education of Republic of Poland under grant
no. W111/7.PR/2012, and the National Science Center of Poland under Maestro Advanced Project no. DEC-2012/06/A/ST1/00262.


\bibliography{biblio}


\end{document}